\documentclass[12pt]{article}
\usepackage{amssymb}
\usepackage{amsmath}
\usepackage{amsthm}
\usepackage{color}
\usepackage{comment}
\usepackage{geometry}		
\usepackage{graphicx}

\geometry{
	hmargin={25mm,25mm},
	vmargin={25mm,30mm}   
}

\DeclareFontFamily{OT1}{pzc}{}
\DeclareFontShape{OT1}{pzc}{m}{it}{<-> s * [1.10] pzcmi7t}{}
\DeclareMathAlphabet{\mathpzc}{OT1}{pzc}{m}{it}

\let\originalleft\left
\let\originalright\right
\renewcommand{\left}{\mathopen{}\mathclose\bgroup\originalleft}
\renewcommand{\right}{\aftergroup\egroup\originalright}

\begin{document}

\def\b0{{\bf 0}}
\def\cO{\mathcal{O}}

\newcommand{\sect}[1]{\S\ref{sec:#1}}

\newtheorem{theorem}{Theorem}
\newtheorem{corollary}[theorem]{Corollary}
\newtheorem{lemma}[theorem]{Lemma}
\newtheorem{proposition}[theorem]{Proposition}

\title{
A general framework for boundary equilibrium bifurcations of Filippov systems.
}
\author{
D.J.W.~Simpson\\\\
Institute of Fundamental Sciences\\
Massey University\\
Palmerston North\\
New Zealand
}
\maketitle


\begin{abstract}

As parameters are varied a boundary equilibrium bifurcation (BEB) occurs when an equilibrium collides with a discontinuity surface in a piecewise-smooth system of ODEs. Under certain genericity conditions, at a BEB the equilibrium either transitions to a pseudo-equilibrium (on the discontinuity surface) or collides and annihilates with a coexisting pseudo-equilibrium. These two scenarios are distinguished by the sign of a certain inner product. Here it is shown that this sign can be determined from the number of unstable directions associated with the two equilibria by using techniques developed by Feigin. A new normal form is proposed for BEBs in systems of any number of dimensions. The normal form involves a companion matrix, as does the leading order sliding dynamics, and so the connection to the stability of the equilibria is explicit. In two dimensions the parameters of the normal form distinguish, in a simple way, the eight topologically distinct cases for the generic local dynamics at a BEB. A numerical exploration in three dimensions reveals that BEBs can create multiple attractors and chaotic attractors, and that the equilibrium at the BEB can be unstable even if both equilibria are stable. The developments presented here stem from seemingly unutilised similarities between BEBs in discontinuous systems (specifically Filippov systems as studied here) and BEBs in continuous systems for which analogous results are, to date, more advanced.

\end{abstract}

\section{Introduction}
\label{sec:intro}

The phase space of a Filippov system is divided into regions $\Omega_i$ within 
which evolution is governed by a smooth set of ODEs. 
Boundaries between regions, termed {\em discontinuity surfaces},
are assumed to be codimension-one and smooth (or possibly piecewise-smooth).
When an orbit reaches a discontinuity surface
(as we follow it with increasing time),
there are two generic possibilities for its subsequent motion: crossing and sliding.
To be more precise, suppose the orbit resides in $\Omega_1$ until reaching a discontinuity surface $\Sigma$
bounding $\Omega_1$ and $\Omega_2$.
If the system in $\Omega_2$ points away from $\Sigma$, 
as in Fig.~\ref{fig:crossingSlidingDiscont}-A,
then the orbit {\em crosses} $\Sigma$ and enters $\Omega_2$.
Alternatively if the system in $\Omega_2$ points towards the discontinuity surface, 
as in Fig.~\ref{fig:crossingSlidingDiscont}-B,
then the orbit subsequently {\em slides} on $\Sigma$.
Such sliding motion is governed by a convex combination of the systems in $\Omega_1$ and $\Omega_2$.

\begin{figure}[h!]
\begin{center}
\setlength{\unitlength}{1cm}
\begin{picture}(8,2.7)
\put(0,0){\includegraphics[width=3.6cm]{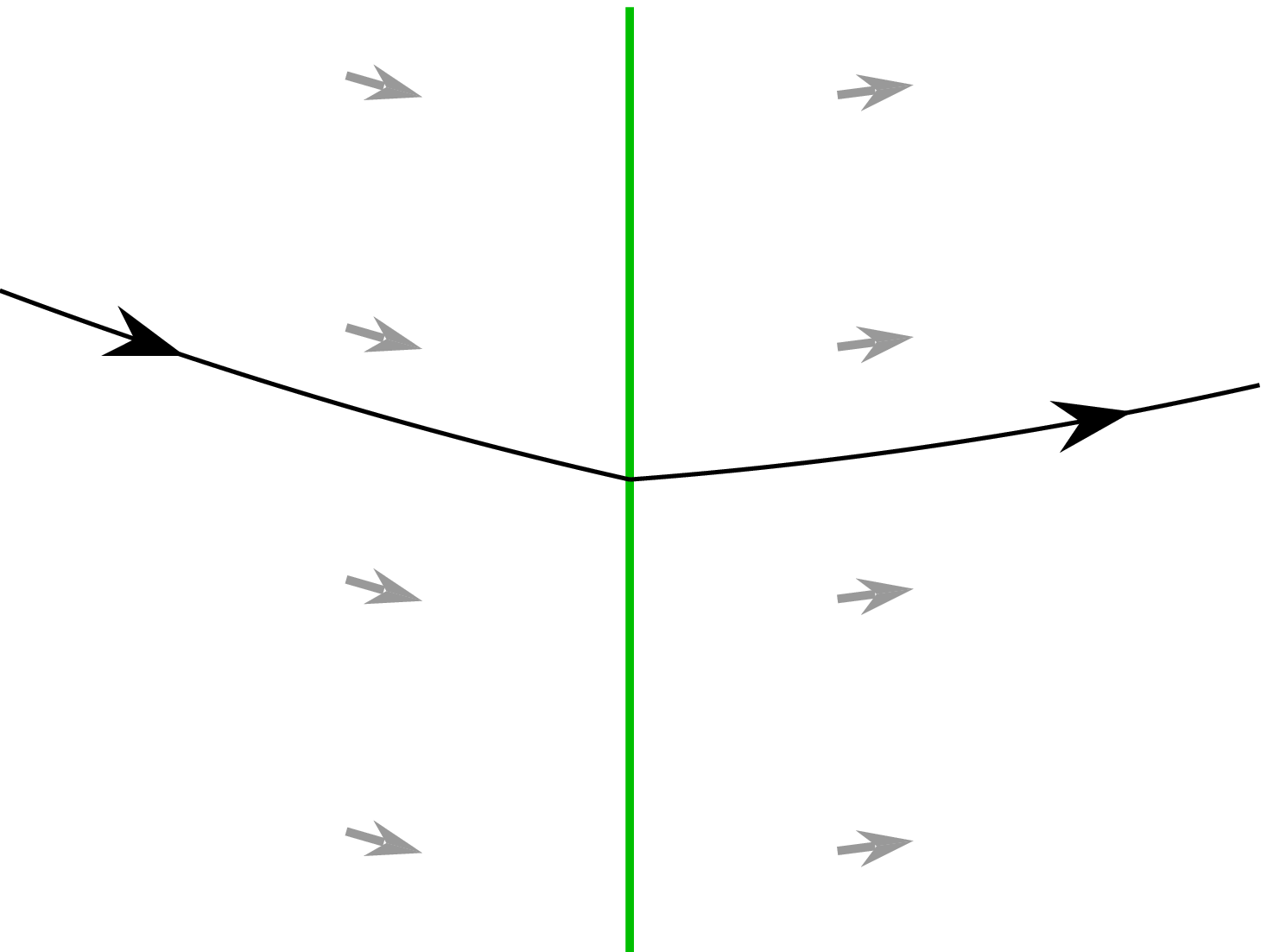}}
\put(4.4,0){\includegraphics[width=3.6cm]{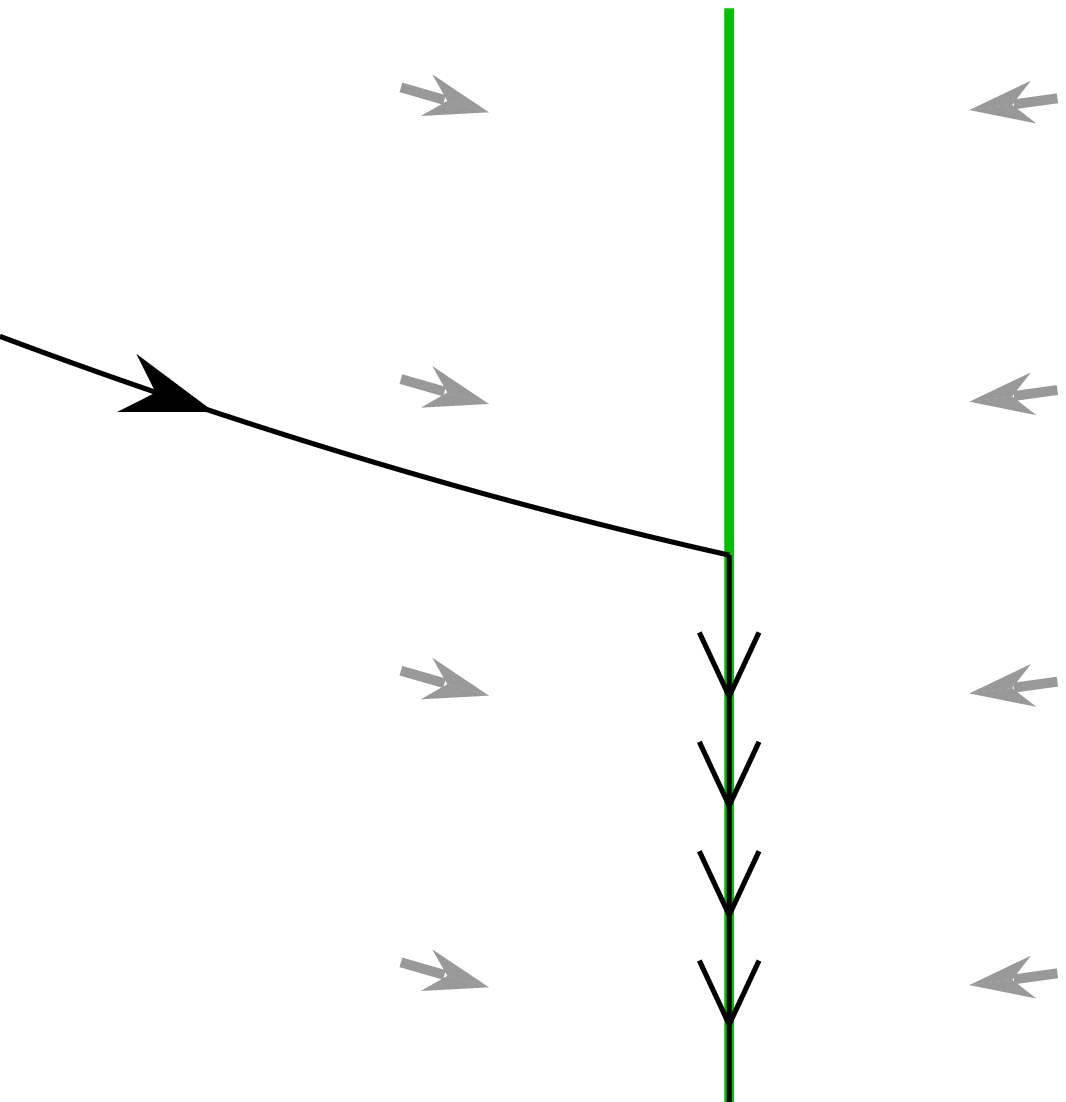}}
\put(0,2.45){\sf \bfseries A}
\put(0,.6){\footnotesize $\Omega_1$}
\put(3.25,.6){\footnotesize $\Omega_2$}
\put(1.85,0){\footnotesize $\Sigma$}
\put(4.4,2.45){\sf \bfseries B}
\put(4.4,.6){\footnotesize $\Omega_1$}
\put(7.65,.6){\footnotesize $\Omega_2$}
\put(6.25,0){\footnotesize $\Sigma$}
\end{picture}
\caption{
Sketches of an orbit of a Filippov system crossing a discontinuity surface (in panel A)
and sliding on the discontinuity surface (in panel B).
\label{fig:crossingSlidingDiscont}
} 
\end{center}
\end{figure}

In mathematical models, sliding motion usually has important physical interpretations.
For instance in simple models of mechanical systems with stick-slip friction,
sliding motion corresponds to the sticking phase of motion \cite{BlCz99}.
For relay control systems, sliding motion represents the
idealised limit that the time between switching events is zero \cite{Jo03}.
In an ecological model of Dercole {\em et.~al.}~\cite{DeGr07},
sliding motion corresponds to predators that are hesitating between two sources of food.


As parameters are varied,
an equilibrium of a Filippov system can collide with a discontinuity surface.
Such BEBs have been identified in mathematical models of a wide variety of physical systems,
see for instance \cite{DeDe11,Kr11,TaLi12,WaWi16}.
Various invariant sets (such as limit cycles) can be created in BEBs.
But if we look only at equilibria then there are two generic scenarios.
These are distinguished by the relative coexistence of the equilibrium undergoing the BEB,
termed a {\em regular equilibrium},
and a {\em pseudo-equilibrium}: an equilibrium of the sliding dynamics.
If these two equilibria do not coexist, we effectively have the `persistence' of a single equilibrium.
If the equilibria do coexist, then they collide and annihilate in a `nonsmooth-fold'.
To determine which situation occurs for a given BEB,
one can evaluate a certain inner product \cite{DiBu08,DiPa08}. 

To develop this further, let us recall what is known about BEBs in
piecewise-smooth systems that are continuous but non-differentiable on discontinuity surfaces,
in this context termed switching manifolds.
For continuous systems, generic BEBs again conform to the two scenarios
of persistence and a nonsmooth-fold,
but here both equilibria are regular (one of each side of the switching manifold).
Locally, the stability of these equilibria is determined by the eigenvalues of the Jacobian matrices
of the two relevant smooth components of the system evaluated at the bifurcation.
As was first shown for piecewise-smooth maps by Feigin \cite{Fe78} (for which the required calculations are almost identical),
the BEB corresponds to persistence
if the sum of the number of positive eigenvalues associated with each equilibrium is even,
and is a nonsmooth-fold if this sum is odd \cite{DiBu08}.

Here we show that a similar result holds for Filippov systems, where now one equilibrium is a pseudo-equilibrium.
The key step in our derivation is to use
the matrix determinant lemma to connect the stability of the pseudo-equilibrium to the known inner product.
An immediate consequence is that if both equilibria are stable,
then neither has a positive eigenvalue 
and so the BEB corresponds to persistence.

In order to understand other invariant sets created in BEBs,
it is in general not possible to employ dimension reduction techniques
that are invaluable for high-dimensional smooth systems of ODEs.
BEBs do not involve centre manifolds and so, as with maps \cite{GlJe15},
it appears that in $n$-dimensional systems BEBs can be inextricably $n$-dimensional \cite{Gl17b}.

BEBs are trivial in one dimension as equilibria are the only possible invariants.
BEBs in two dimensions were studied in detail by Kuznetsov {\em et.~al.}~\cite{KuRi03}
but have only recently been completely classified.
While it has long been known that there are eight topologically distinct cases for the
generic local dynamics of a system at a BEB \cite{Fi88},
some of these cases have multiple unfoldings and the realisation that there are exactly $12$
topologically distinct BEBs in two dimensions was first made by Hogan {\em et.~al.}~\cite{HoHo16}.

To facilitate studies of BEBs in more than two dimensions,
here we introduce an $n$-dimensional BEB normal form.
Normal forms given previously typically use a real Jordan form or a symmetric matrix
for the Jacobian matrix of the regular equilibrium \cite{DeTo14,Gl16d}.
Here a companion matrix is used because,
as with BEBs in continuous systems \cite{DiBu08,CaFr02},
such matrices are well-suited for coordinate transformations that leave the discontinuity surface unchanged.
As an added benefit, the Jacobian matrix of the pseudo-equilibrium is also a companion matrix.
Below we show that a Filippov system with a non-degenerate BEB can be transformed to the normal form
if and only if the Jacobian matrix of the regular equilibrium has no eigenvector tangent to the discontinuity surface.

The remainder of the paper is organised as follows.
We first formulate BEBs in a general setting and clarify sliding motion, \sect{setup}.
We then compute equilibria, \sect{eq},
and relate the relative coexistence of the equilibria to their associated eigenvalues, \sect{Feigin}.
Complete derivations are provided in \sect{proof} and consequences for codimension-two BEBs are discussed in \sect{codim2}.
The normal form is introduced in \sect{normalForm},
and its basic properties are discussed in \sect{properties}.
Section \ref{sec:2d} relates the normal form in two dimensions to known results,
and \sect{3d} provides a brief numerical exploration of the normal form in three dimensions.
Here we discover a chaotic attractor (similar to Shilnikov chaos described by Glendinning \cite{Gl17d}),
multiple attractors, and the lack of an attractor in a case for which all associated eigenvalues have negative real part.
Finally, conclusions are presented in \sect{conc}.

Throughout this paper,
$e_1,\ldots,e_n$ denote the standard basis vectors of $\mathbb{R}^n$,
and $\b0 \in \mathbb{R}^n$ denotes the zero vector (or origin).

\section{Preliminaries}
\label{sec:setup}

We consider systems of the form
\begin{equation}
\dot{x} = \begin{cases}
F^L(x;\mu), & x_1 < 0, \\
F^R(x;\mu), & x_1 > 0,
\end{cases}
\label{eq:F}
\end{equation}
where $F^L$ and $F^R$ have continuous second derivatives.
Here $x \in \mathbb{R}^n$ is the state variable and $\mu \in \mathbb{R}$ is a parameter.
The discontinuity surface, call it $\Sigma$, is where the first component of $x$ vanishes: $x_1 = 0$.
For systems with discontinuity surfaces that take a more general form, say $H(x) = 0$,
the idea is that one could apply a coordinate transformation to convert it to the form \eqref{eq:F}, at least locally.

Let
\begin{equation}
\chi(x;\mu) = F^L_1(x;\mu) F^R_1(x;\mu),
\label{eq:chi}
\end{equation}
be the product of the first components of $F^L$ and $F^R$.
Subsets of $\Sigma$ for which $\chi > 0$ are {\em crossing regions}
(in Fig.~\ref{fig:crossingSlidingDiscont}-A we have $F^L_1 > 0$ and $F^R_1 > 0$).
Subsets of $\Sigma$ for which $\chi < 0$ are {\em sliding regions}.
A sliding region is {\em attracting} if $F^L_1 > 0$ and $F^R_1 < 0$ (as in Fig.~\ref{fig:crossingSlidingDiscont}-B),
and {\em repelling} if $F^L_1 < 0$ and $F^R_1 > 0$.

Dynamics on a sliding region are governed by $\dot{x} = F^S(x;\mu)$
where
\begin{equation}
F^S = \frac{F^L_1 F^R - F^R_1 F^L}{F^L_1 - F^R_1},
\label{eq:Fslide2}
\end{equation}
is the unique convex combination of $F^L$ and $F_R$ for which $F^S_1 = 0$ \cite{DiBu08,Fi88}.

Now suppose $F^L$ has an equilibrium at $x = \b0$ when $\mu = 0$,
as illustrated in Fig.~\ref{fig:BEBFilippovSchem1_framework}.
Then
\begin{equation}
F^L(x;\mu) = A x + b \mu + \cO(2), 
\label{eq:FL}
\end{equation}
for some $n \times n$ matrix $A$ and $b \in \mathbb{R}^n$,
and we write $\cO(k)$ for terms that are order $k$ or greater in $x$ and $\mu$.
Notice that $A$ is the Jacobian matrix $D F^L(\b0;0)$.
Also
\begin{equation}
F^R(x;\mu) = c + \cO(1),
\label{eq:FR}
\end{equation}
for some $c \in \mathbb{R}^n$, and so our system has the form
\begin{equation}
\dot{x} = \begin{cases}
A x + b \mu + \cO(2), & x_1 < 0, \\
c + \cO(1), & x_1 > 0.
\end{cases}
\label{eq:F2}
\end{equation}
Locally, orbits in $x_1 > 0$ approach $\Sigma$ (as time increases) if $c_1 < 0$,
and head away from $\Sigma$ if $c_1 > 0$.
By substituting the above expressions for $F^L$ and $F^R$ into \eqref{eq:Fslide2}, we obtain
\begin{equation}
F^S(x;\mu) = \left( I - \frac{c e_1^{\sf T}}{c_1} \right)
(A x + b \mu) + \cO(2),
\label{eq:Fslide3}
\end{equation}
assuming $c_1 \ne 0$.

\begin{figure}[h!]
\begin{center}
\setlength{\unitlength}{1cm}
\begin{picture}(5.2,3.9)
\put(0,0){\includegraphics[width=5.2cm]{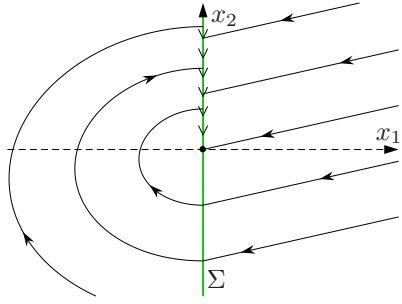}}
\put(4.9,2.08){\footnotesize $x_1$}
\put(2.71,3.65){\footnotesize $x_2$}
\put(2.65,.1){\footnotesize $\Sigma$}
\end{picture}
\caption{
A typical phase portrait of \eqref{eq:F2} with $\mu = 0$ (i.e.~at the BEB).
\label{fig:BEBFilippovSchem1_framework}
} 
\end{center}
\end{figure}

\section{Equilibria}
\label{sec:eq}

A {\em regular equilibrium} of \eqref{eq:F} is a point $x \in \mathbb{R}^n$
for which $F^L(x;\mu) = \b0$ or $F^R(x;\mu) = \b0$.
For the system \eqref{eq:F2}, $F^R(x;\mu) = \b0$ has no local solution if $c \ne \b0$.
Solving $F^L(x;\mu) = \b0$ yields
\begin{equation}
x^L(\mu) = -A^{-1} b \mu + \cO \left( \mu^2 \right),
\label{eq:xL}
\end{equation}
assuming $\det(A) \ne 0$, and so we have the following result.

\begin{lemma}
If $\det(A) \ne 0$ and $c \ne \b0$, then, in a neighbourhood of $(x;\mu) = (\b0,0)$,
the system \eqref{eq:F2} has a unique regular equilibrium $x^L(\mu)$ given by \eqref{eq:xL}.
\label{le:xL}
\end{lemma}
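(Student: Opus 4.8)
The plan is to verify the expansion \eqref{eq:xL} directly and then invoke the implicit function theorem to get uniqueness in a neighbourhood. First I would write $F^L(x;\mu) = Ax + b\mu + G(x;\mu)$ where $G(x;\mu) = \cO(2)$, i.e.\ $G$ and its first partials vanish at $(\b0;0)$ and $G$ has continuous second derivatives since $F^L$ does. The equation $F^L(x;\mu) = \b0$ at $(\b0;0)$ holds by hypothesis (that is exactly the statement that $F^L$ has an equilibrium at the origin when $\mu=0$), and the $x$-derivative of $F^L$ at $(\b0;0)$ is $A$, which is invertible by assumption $\det(A)\neq 0$. Hence the implicit function theorem applies: there is a neighbourhood of $\mu = 0$ and a unique $C^1$ (indeed $C^2$, matching the regularity of $F^L$) function $\mu \mapsto x^L(\mu)$ with $x^L(0) = \b0$ and $F^L(x^L(\mu);\mu) = \b0$, and no other zeros of $F^L$ lie in a neighbourhood of $(\b0;0)$.

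Next I would pin down the leading-order term. Differentiating the identity $F^L(x^L(\mu);\mu) = \b0$ with respect to $\mu$ at $\mu = 0$ gives $A\,\frac{dx^L}{d\mu}(0) + b = \b0$, so $\frac{dx^L}{d\mu}(0) = -A^{-1}b$, which yields $x^L(\mu) = -A^{-1}b\,\mu + \cO(\mu^2)$ as claimed in \eqref{eq:xL}. (Equivalently, one can substitute the ansatz $x^L(\mu) = v\mu + \cO(\mu^2)$ into $Ax + b\mu + \cO(2) = \b0$ and match the order-$\mu$ terms to get $Av + b = \b0$.)

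Finally I would dispose of the $F^R$ branch: for the system \eqref{eq:F2} one has $F^R(x;\mu) = c + \cO(1)$, so $F^R(\b0;0) = c \neq \b0$ by hypothesis, and by continuity $F^R$ is bounded away from $\b0$ on a neighbourhood of $(\b0;0)$; hence $F^R(x;\mu) = \b0$ has no solution there. Combining the two branches, the only regular equilibrium of \eqref{eq:F2} near $(\b0;0)$ is $x^L(\mu)$. I do not anticipate a genuine obstacle here — the result is essentially an application of the implicit function theorem together with a first-order Taylor expansion; the only point requiring a little care is confirming that the $\cO(2)$ remainder is genuinely $C^1$-small (which follows from the stated $C^2$ regularity of $F^L$) so that the implicit function theorem is legitimately applicable and the expansion of $x^L$ to order $\mu$ is justified.
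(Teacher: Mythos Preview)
Your proposal is correct and matches the paper's approach: the paper simply writes ``Solving $F^L(x;\mu)=\b0$ yields \eqref{eq:xL}, assuming $\det(A)\ne 0$, and so we have the following result,'' treating the lemma as an immediate consequence of the expansion \eqref{eq:FL} together with the nonvanishing of $c$ in \eqref{eq:FR}. You have supplied the implicit-function-theorem justification and the $F^R$ argument that the paper leaves to the reader, but the underlying reasoning is identical.
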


Since $F^L$ only applies to points with $x_1 < 0$,
we say that $x^L$ is {\em admissible} if $x^L_1 < 0$, and {\em virtual} if $x^L_1 > 0$.

A {\em pseudo-equilibrium} of \eqref{eq:F} is a point $x \in \Sigma$ for which $F^S(x;\mu) = \b0$.
To calculate pseudo-equilibria of \eqref{eq:F2}, first observe that the form \eqref{eq:Fslide3}
hides the fact that sliding dynamics is $(n-1)$-dimensional.
In \eqref{eq:Fslide3} we are assuming $x_1 = 0$; also $\dot{x}_1 = 0$.
For this reason, we let
$\tilde{M}$ denote the lower-right $(n-1) \times (n-1)$ block of
\begin{equation}
M = D F^S(\b0;0) = \left( I - \frac{c e_1^{\sf T}}{c_1} \right) A.
\label{eq:M}
\end{equation}
The matrix $\tilde{M}$ is the Jacobian of the sliding dynamics evaluated at $(x;\mu) = (\b0,0)$,
and so we have the following result.

\begin{lemma}
If $\det(\tilde{M}) \ne 0$ and $c_1 \ne 0$, then, in a neighbourhood of $(x;\mu) = (\b0,0)$,
the system \eqref{eq:F2} has a unique pseudo-equilibrium $x^S(\mu)$.
\label{le:xS}
\end{lemma}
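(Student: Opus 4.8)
The plan is to apply the implicit function theorem to the \emph{reduced} sliding system on $\Sigma$. Since a pseudo-equilibrium lies on $\Sigma = \{x_1 = 0\}$ and $F^S_1 \equiv 0$ there, I would write $x = \left(0,\tilde{x}\right)$ with $\tilde{x} = (x_2,\dots,x_n) \in \mathbb{R}^{n-1}$, and let $\tilde{F}^S(\tilde{x};\mu)$ denote the last $n-1$ components of $F^S(0,\tilde{x};\mu)$. Finding a pseudo-equilibrium near $(\b0,0)$ is then equivalent to solving the $(n-1)$-dimensional equation $\tilde{F}^S(\tilde{x};\mu) = \b0$ for $\tilde{x}$ near $\b0$ and $\mu$ near $0$. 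Note the hypothesis $c_1 \ne 0$ is exactly what makes $F^S$, hence $\tilde{F}^S$, well defined and $C^1$ (indeed $C^2$, since $F^L$ and $F^R$ are) via \eqref{eq:Fslide3}.

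First I would check that $(\tilde{x};\mu) = (\b0,0)$ solves this equation: since $F^L(\b0;0) = \b0$, formula \eqref{eq:Fslide3} gives $F^S(\b0;0) = \b0$, so in particular $\tilde{F}^S(\b0;0) = \b0$. Next I would compute the Jacobian $D_{\tilde{x}} \tilde{F}^S(\b0;0)$. Differentiating \eqref{eq:Fslide3}, the $\cO(2)$ remainder contributes nothing at the origin and the $\mu$-term does not contribute to $D_x$, so $D_x F^S(\b0;0) = M$ with $M$ as in \eqref{eq:M}; the Jacobian $D_{\tilde{x}} \tilde{F}^S(\b0;0)$ is then obtained from $M$ by deleting its first row (the component of $F^S$ we discard) and its first column (we hold $x_1 = 0$ fixed), which is precisely the lower-right $(n-1)\times(n-1)$ block $\tilde{M}$.

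With $\det(\tilde{M}) \ne 0$, the implicit function theorem then yields a neighbourhood of $\mu = 0$ and a unique $C^1$ function $\tilde{x}^S(\mu)$ with $\tilde{x}^S(0) = \b0$ solving $\tilde{F}^S\left(\tilde{x}^S(\mu);\mu\right) = \b0$; setting $x^S(\mu) = \left(0,\tilde{x}^S(\mu)\right)$ gives the asserted unique pseudo-equilibrium in a neighbourhood of $(\b0,0)$. I do not anticipate a serious obstacle: the only points needing care are the bookkeeping that reduces $F^S = \b0$ to its $n-1$ nontrivial components — so that the relevant linearisation is $\tilde{M}$ rather than the full matrix $M$, which is necessarily singular — and confirming the claimed block structure of $M$, which holds because the first row of $I - c e_1^{\sf T}/c_1$ vanishes and hence so does the first row of $M$.
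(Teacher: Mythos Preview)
Your proposal is correct and matches the paper's reasoning: the paper does not write out a proof of this lemma but simply observes that $\tilde{M}$ is the Jacobian of the $(n-1)$-dimensional sliding dynamics at $(\b0;0)$ and states the lemma as an immediate consequence, which is exactly the implicit function theorem argument you have spelled out. Your bookkeeping (reducing to the last $n-1$ components, noting the first row of $M$ vanishes so the relevant linearisation is $\tilde{M}$, and invoking $c_1 \ne 0$ for regularity of $F^S$) is precisely what underlies the paper's one-line justification.
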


Notice $F^S(\b0;0) = \b0$, thus, by uniqueness, $x^S(0) = \b0$.
Since $F^S$ only applies to points in sliding regions of $\Sigma$,
we say that $x^S$ is {\em admissible} if
$\chi(x^S(\mu);\mu) < 0$, and {\em virtual} if $\chi(x^S(\mu);\mu) > 0$.

Now let us think about how the admissibility of $x^L$ and $x^S$ change as
the value of $\mu$ changes sign.
Since $x^L(0) = x^S(0) = \b0$, we can write 
\begin{align}
x^L_1(\mu) &= \alpha_L \mu + \cO \left( \mu^2 \right),
\label{eq:alphaL} \\
\chi(x^S(\mu);\mu) &= \alpha_S \mu + \cO \left( \mu^2 \right),
\label{eq:alphaS}
\end{align}
for some $\alpha_L, \alpha_S \in \mathbb{R}$.
Then $x^L$ is admissible if $\alpha_L \mu < 0$, and $x^S$ is admissible if $\alpha_S \mu < 0$.
If $x^L$ and $x^S$ are admissible for different signs of $\mu$,
the BEB is referred to as {\em persistence}, Fig.~\ref{fig:persistNSFoldDiscont}-A.
If $x^L$ and $x^S$ are admissible for the same sign of $\mu$,
the BEB is referred to as a {\em nonsmooth-fold}, Fig.~\ref{fig:persistNSFoldDiscont}-B.
Immediately we have the following result.

\begin{figure}[b!]
\begin{center}
\setlength{\unitlength}{1cm}
\begin{picture}(8,2.7)
\put(0,0){\includegraphics[width=3.6cm]{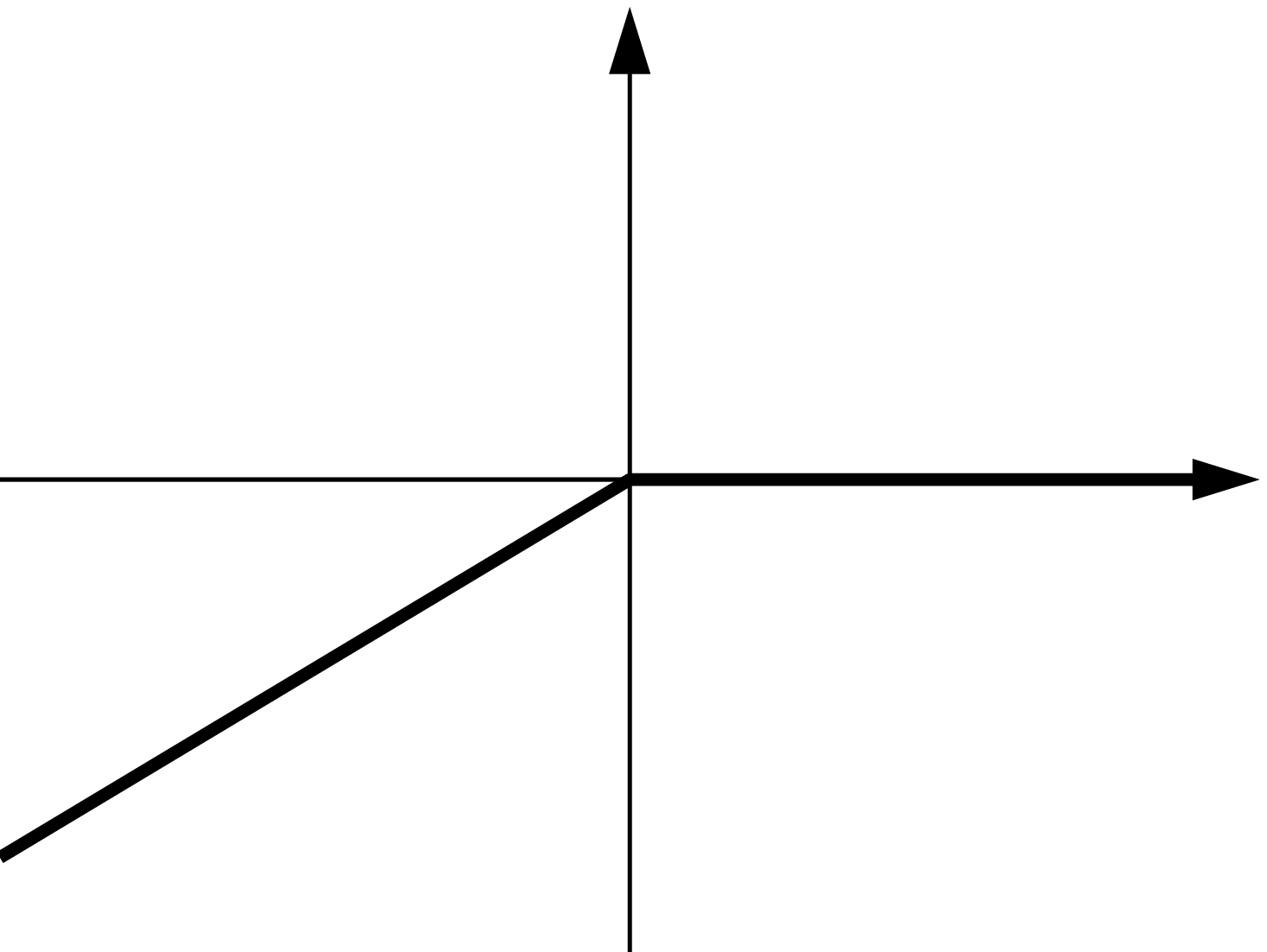}}
\put(4.4,0){\includegraphics[width=3.6cm]{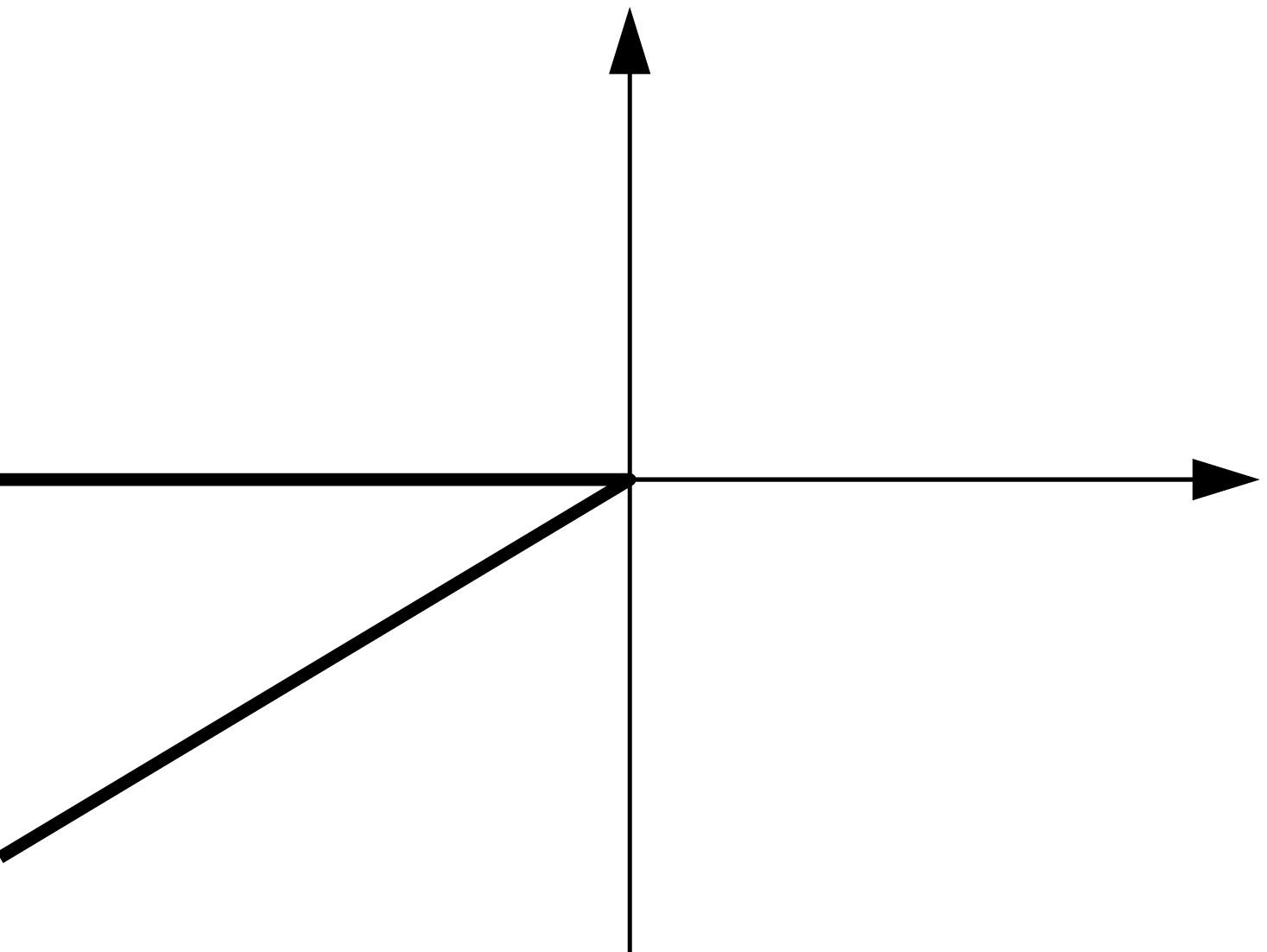}}
\put(0,2.45){\sf \bfseries A}
\put(3.2,1.46){\footnotesize $\mu$}
\put(1.91,2.45){\footnotesize $x_1$}
\put(.2,.54){\scriptsize $x^L$}
\put(2.2,1.42){\scriptsize $x^S$}
\put(4.4,2.45){\sf \bfseries B}
\put(7.6,1.46){\footnotesize $\mu$}
\put(6.31,2.45){\footnotesize $x_1$}
\put(4.6,.54){\scriptsize $x^L$}
\put(5,1.42){\scriptsize $x^S$}
\end{picture}
\caption{
Typical bifurcation diagrams of \eqref{eq:F2}
showing persistence (panel A) and a nonsmooth-fold (panel B).
\label{fig:persistNSFoldDiscont}
} 
\end{center}
\end{figure}

\begin{lemma}
Suppose $\det(A) \ne 0$, $\det(\tilde{M}) \ne 0$, and $c_1 \ne 0$.
Then the BEB at $\mu = 0$ corresponds to persistence if $\alpha_L \alpha_S < 0$,
and to a nonsmooth-fold if $\alpha_L \alpha_S > 0$.
\label{le:classification}
\end{lemma}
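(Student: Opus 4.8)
The plan is to combine the admissibility criteria derived above with the definitions of persistence and a nonsmooth-fold; essentially no work beyond unwinding notation is required. First I would apply Lemma~\ref{le:xL} and Lemma~\ref{le:xS}: under the hypotheses $\det(A) \ne 0$, $\det(\tilde{M}) \ne 0$, and $c_1 \ne 0$, the system \eqref{eq:F2} has, in a neighbourhood of $(x;\mu) = (\b0,0)$, exactly one regular equilibrium $x^L(\mu)$ and exactly one pseudo-equilibrium $x^S(\mu)$. Hence the BEB at $\mu = 0$ is unambiguously the collision of these two branches, and its type is determined by comparing the ranges of $\mu$ over which each branch is admissible.

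Next I would recall from \eqref{eq:alphaL} and \eqref{eq:alphaS} that $x^L$ is admissible exactly when $\alpha_L \mu < 0$ and $x^S$ is admissible exactly when $\alpha_S \mu < 0$. Since the statement concerns the cases $\alpha_L \alpha_S < 0$ and $\alpha_L \alpha_S > 0$, we have $\alpha_L \ne 0$ and $\alpha_S \ne 0$, so for all $\mu \ne 0$ with $|\mu|$ sufficiently small the higher-order terms in \eqref{eq:alphaL} and \eqref{eq:alphaS} do not affect the relevant signs. Thus $x^L$ is admissible precisely for $\mu$ of the opposite sign to $\alpha_L$, and $x^S$ precisely for $\mu$ of the opposite sign to $\alpha_S$.

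Finally I would treat the two cases separately. If $\alpha_L \alpha_S < 0$ then $\alpha_L$ and $\alpha_S$ have opposite signs, so $x^L$ and $x^S$ are admissible for opposite signs of $\mu$, which by definition is persistence (cf.~Fig.~\ref{fig:persistNSFoldDiscont}-A). If $\alpha_L \alpha_S > 0$ then $\alpha_L$ and $\alpha_S$ have the same sign, so $x^L$ and $x^S$ are admissible for the same sign of $\mu$, which by definition is a nonsmooth-fold (cf.~Fig.~\ref{fig:persistNSFoldDiscont}-B). The argument is a direct translation of the definitions, so I do not anticipate any substantive obstacle; the only subtlety worth stating explicitly is the passage from the leading-order expansions \eqref{eq:alphaL}--\eqref{eq:alphaS} to genuine admissibility, which is valid on a sufficiently small punctured neighbourhood of $\mu = 0$ precisely because $\alpha_L$ and $\alpha_S$ are nonzero.
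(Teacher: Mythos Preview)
Your proposal is correct and matches the paper's approach: the lemma is stated there as an immediate consequence of the admissibility criteria $\alpha_L \mu < 0$ and $\alpha_S \mu < 0$ together with the definitions of persistence and nonsmooth-fold, and the paper gives no further argument. Your explicit remark that $\alpha_L, \alpha_S \ne 0$ justifies neglecting the $\cO(\mu^2)$ terms is a welcome clarification but adds nothing beyond what the paper treats as obvious.
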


Computations of $\alpha_L$ and $\alpha_S$ form the subject of the next two sections.

\section{Feigin analysis}
\label{sec:Feigin}

In a key 1978 paper \cite{Fe78},
Feigin showed that for border-collision bifurcations of piecewise-smooth continuous maps,
the existence and relative coexistence of fixed points and period-two solutions
can be determined, in a simple way, from the eigenvalues of the two corresponding Jacobian matrices.
For BEBs of piecewise-smooth continuous ODEs, the computations are almost identical \cite{DiBu08}.
The following result (proved in \sect{proof}) shows how this `Feigin analysis' extends to BEBs of \eqref{eq:F2}.
We assume $\alpha_L \ne 0$ to ensure that $\mu$ unfolds the bifurcation in a generic fashion
(see also the comments at the start of \sect{proof}).
For any $a \in \mathbb{R}$, we write
\begin{equation}
{\rm sgn}(a) = \begin{cases}
-1, & a<0, \\
0, & a=0, \\
1, & a>0.
\end{cases}
\label{eq:sgna}
\end{equation}

\begin{theorem}
Suppose $\det(A) \ne 0$, $\det(\tilde{M}) \ne 0$, $c_1 \ne 0$, and $\alpha_L \ne 0$.
Then
\begin{equation}
\label{eq:Feigin}								
{\rm sgn} \left( \alpha_L \alpha_S \right) = (-1)^{N_L + N_S} \,{\rm sgn}(c_1),
\end{equation}
where $N_L$ is the number of real positive eigenvalues of $A$,
and $N_S$ is the number of real positive eigenvalues of $\tilde{M}$.
\label{th:Feigin}
\end{theorem}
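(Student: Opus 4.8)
The plan is to reduce the left side of \eqref{eq:Feigin} to the sign of the single scalar $e_1^{\sf T} A^{-1} c$, then tie that scalar to $\det(\tilde M)$ via the matrix determinant lemma, and finally convert the resulting determinants into eigenvalue counts.

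First I would read $\alpha_L$ off \eqref{eq:xL}: since $x^L(\mu) = -A^{-1} b \mu + \cO(\mu^2)$ we get $\alpha_L = -e_1^{\sf T} A^{-1} b$. For $\alpha_S$ I would exploit the fact that at a pseudo-equilibrium the two vector fields are parallel. Indeed $F^S(x^S;\mu) = \b0$ together with \eqref{eq:Fslide2} forces $F^L_1 F^R = F^R_1 F^L$ at $x^S(\mu)$, so $F^L(x^S(\mu);\mu) = \lambda(\mu)\, F^R(x^S(\mu);\mu)$ where $\lambda(\mu) = F^L_1(x^S(\mu);\mu)/F^R_1(x^S(\mu);\mu)$ is well defined near $\mu = 0$ because $F^R_1 = c_1 + \cO(\mu) \ne 0$. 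Since $F^L(\b0;0) = \b0$ and $c \ne \b0$ we get $\lambda(0) = 0$, hence $\chi(x^S(\mu);\mu) = \lambda(\mu)\, F^R_1(x^S(\mu);\mu)^2 = c_1^2\, \lambda'(0)\, \mu + \cO(\mu^2)$, i.e.\ $\alpha_S = c_1^2\, \lambda'(0)$. Differentiating the parallel relation at $\mu = 0$ (using $F^L = Ax + b\mu + \cO(2)$ and $F^R = c + \cO(1)$) gives $A\, \dot x^S(0) + b = \lambda'(0)\, c$, and because $x^S(\mu)$ lies on $\Sigma$ its first component is identically zero; taking the first component of $\dot x^S(0) = \lambda'(0)\, A^{-1} c - A^{-1} b$ therefore yields $\lambda'(0) = (e_1^{\sf T} A^{-1} b)/(e_1^{\sf T} A^{-1} c)$. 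Combining, $\alpha_L \alpha_S = -c_1^2\, (e_1^{\sf T} A^{-1} b)^2/(e_1^{\sf T} A^{-1} c)$, and since $\alpha_L \ne 0$ forces $e_1^{\sf T} A^{-1} b \ne 0$, we obtain ${\rm sgn}(\alpha_L \alpha_S) = -{\rm sgn}(e_1^{\sf T} A^{-1} c)$.

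The crux is then to relate $e_1^{\sf T} A^{-1} c$ to $\det(\tilde M)$, and this is where the matrix determinant lemma enters. The point is that $M = (I - c e_1^{\sf T}/c_1) A$ has zero first row, so adding $e_1 e_1^{\sf T}$ to $M$ replaces that row by $e_1^{\sf T}$ while leaving the lower-right $(n-1)\times(n-1)$ block untouched; hence $\det(\tilde M) = \det(M + e_1 e_1^{\sf T})$. Writing $M + e_1 e_1^{\sf T} = A + U V^{\sf T}$ with $U$ the $n \times 2$ matrix with columns $-c/c_1$ and $e_1$, and $V$ the $n \times 2$ matrix with columns $A^{\sf T} e_1$ and $e_1$, the matrix determinant lemma gives $\det(\tilde M) = \det(A)\, \det(I_2 + V^{\sf T} A^{-1} U)$, and a short computation (using $e_1^{\sf T} A\, A^{-1} = e_1^{\sf T}$ and $e_1^{\sf T} c = c_1$) collapses the $2 \times 2$ determinant to $(e_1^{\sf T} A^{-1} c)/c_1$. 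Thus $\det(\tilde M) = \det(A)\, (e_1^{\sf T} A^{-1} c)/c_1$, which also confirms the consistency of the hypothesis $\det(\tilde M) \ne 0$. I expect the bookkeeping in this $2 \times 2$ reduction --- and, to a lesser extent, obtaining the clean closed form for $\alpha_S$ --- to be the fiddliest part, though both are routine.

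Finally I would assemble signs. From the last identity, ${\rm sgn}(e_1^{\sf T} A^{-1} c) = {\rm sgn}(c_1)\, {\rm sgn}(\det A)\, {\rm sgn}(\det \tilde M)$. Since the non-real eigenvalues of a real matrix come in conjugate pairs and contribute a positive factor to the determinant, ${\rm sgn}(\det A) = (-1)^{n - N_L}$ and ${\rm sgn}(\det \tilde M) = (-1)^{(n-1) - N_S}$ (all determinants nonzero by hypothesis), so ${\rm sgn}(\det A)\, {\rm sgn}(\det \tilde M) = -(-1)^{N_L + N_S}$. Therefore ${\rm sgn}(\alpha_L \alpha_S) = -{\rm sgn}(e_1^{\sf T} A^{-1} c) = -{\rm sgn}(c_1) \cdot \bigl(-(-1)^{N_L + N_S}\bigr) = (-1)^{N_L + N_S}\, {\rm sgn}(c_1)$, which is \eqref{eq:Feigin}.
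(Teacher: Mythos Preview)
Your proof is correct and follows essentially the same route as the paper: both hinge on the identity $\det(\tilde M) = \det(A)\,(e_1^{\sf T} A^{-1} c)/c_1$ obtained via the matrix determinant lemma, followed by the same determinant-to-eigenvalue sign conversion. The only differences are tactical --- you derive $\alpha_S$ through the parallelism scalar $\lambda(\mu) = F^L_1/F^R_1$ rather than by left-multiplying $F^S = \b0$ by $e_1^{\sf T}\,{\rm adj}(A)$, and you apply the determinant lemma directly to the rank-two perturbation $M + e_1 e_1^{\sf T}$ of $A$ instead of to $\det(\lambda I - M)$ expanded in $\lambda$ --- but the substance is the same.
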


Theorem \ref{th:Feigin} is practical in the sense that it
shows how the BEB can be classified from a simple calculation.
The matrices $A$ and $\tilde{M}$ govern the stability of $x^L$ and $x^S$ and their local dynamics.
This is because the Jacobian matrix $D F^L$ evaluated at $x^L(\mu)$ is equal to $A + \cO(\mu)$.
Thus, if $A$ has no eigenvalues with zero real part,
then, for all sufficiently small values of $\mu$ for which $x^L(\mu)$ is admissible,
the dimension of the unstable manifold of $x^L(\mu)$ is equal to the number of eigenvalues of $A$
(counting algebraic multiplicity) with positive real part.
If we let $D_L$ denote this dimension, then
\begin{equation}
(-1)^{D_L} = (-1)^{N_L},
\label{eq:DL}
\end{equation}
because complex eigenvalues of $A$ appear in complex conjugate pairs.

More care is required to describe the stability of the pseudo-equilibrium $x^S$ in the same manner.
In the context of the $(n-1)$-dimensional sliding dynamics,
if $\tilde{M}$ has no eigenvalues with zero real part,
then the dimension of the unstable manifold of $x^S(\mu)$, call it $\tilde{D}_S$,
is equal to the number of eigenvalues of $\tilde{M}$ (counting algebraic multiplicity) with positive real part,
and, as above, $(-1)^{\tilde{D}_S} = (-1)^{N_S}$.
For the full system \eqref{eq:F2},
we look at the type of sliding region to which $x^S$ belongs.
If $c_1 < 0$, then the sliding region is attracting,
and so the dimension of the unstable manifold of $x^S(\mu)$, call it $D_S$, is equal to $\tilde{D}_S$.
If instead $c_1 > 0$, then the sliding region is repelling, and so $D_S = \tilde{D}_S + 1$,
In summary,
\begin{equation}
(-1)^{D_S} = \begin{cases}
(-1)^{N_S}, & c_1 < 0, \\
(-1)^{N_S + 1}, & c_1 > 0.
\end{cases}
\label{eq:DS}
\end{equation}

By then combining \eqref{eq:Feigin}--\eqref{eq:DS} we obtain
\begin{equation}
{\rm sgn} \left( \alpha_L \alpha_S \right) = (-1)^{D_L + D_S + 1},
\label{eq:Feigin2}
\end{equation}
which connects the classification of the BEB to the dimensions of the unstable manifolds of the equilibria.
In practice these dimensions may be known from numerical simulations.
In particular, if both $x^L$ and $x^S$ are stable,
then $D_L = D_S = 0$, thus $\alpha_L \alpha_S < 0$,
and hence the BEB corresponds to persistence by Lemma \ref{le:classification}.

\section{Proof of Theorem \ref{th:Feigin}}
\label{sec:proof}


The {\em adjugate} of $A$, denoted ${\rm adj}(A)$,
is the transpose of the cofactor matrix of $A$.
In particular, if $\det(A) \ne 0$, then ${\rm adj}(A) = \det(A) A^{-1}$.
As with continuous piecewise-smooth systems \cite{Si10}, we let
\begin{equation}
\varrho^{\sf T} = e_1^{\sf T} {\rm adj}(A).
\label{eq:varrho}
\end{equation}
Then, by \eqref{eq:xL} and \eqref{eq:alphaL},
\begin{equation}
\alpha_L = -\frac{\varrho^{\sf T} b}{\det(A)}.
\label{eq:alphaL2}
\end{equation}
Recall, we require $\det(A) \ne 0$ so that $x^L$ is well-defined and unique.
From \eqref{eq:alphaL2} we see that $\varrho^{\sf T} b \ne 0$ is needed to ensure
that $x^L(\mu)$ moves away from $\Sigma$ as the value of $\mu$ is varied from $0$
at an asymptotically linear rate.
That is, $\varrho^{\sf T} b \ne 0$ is the {\em transversality condition} for the BEB.

As mentioned in \sect{intro},
the distinction between persistence and a nonsmooth-fold
has previously been equated to the sign of a certain inner product \cite{DiBu08,DiPa08}.
For our system \eqref{eq:F2}, this inner product is $\varrho^{\sf T} c$
and appears in the following result.

\begin{lemma}
Suppose $\det(A) \ne 0$, $\det(\tilde{M}) \ne 0$, $c_1 \ne 0$, and $\varrho^{\sf T} c \ne 0$.
Then
\begin{equation}
F^L_1(x^S(\mu);\mu) = \frac{\varrho^{\sf T} b \,c_1}{\varrho^{\sf T} c} \,\mu + \cO \left( \mu^2 \right).
\label{eq:FL1xS}
\end{equation}
\label{le:FL1xS}
\end{lemma}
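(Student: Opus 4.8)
The plan is to compute $\varrho^{\sf T} F^L(x^S(\mu);\mu)$ in two independent ways and compare. Under the stated hypotheses Lemma~\ref{le:xS} provides a unique pseudo-equilibrium $x^S(\mu)$, and since it arises from the implicit function theorem applied to the sliding field (which is $C^2$ wherever $F^L_1 - F^R_1 \ne 0$), the map $\mu \mapsto x^S(\mu)$ is $C^2$ with $x^S(0) = \b0$. Thus $x^S(\mu) = \cO(\mu)$, and because $x^S(\mu) \in \Sigma$ its first component vanishes identically, $e_1^{\sf T} x^S(\mu) = 0$.

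First I would record the algebraic identity $\varrho^{\sf T} A = \det(A)\, e_1^{\sf T}$, which follows directly from \eqref{eq:varrho} and ${\rm adj}(A)\,A = \det(A)\,I$. Combining this with the expansion \eqref{eq:FL} and $e_1^{\sf T} x^S(\mu) = 0$ gives the first evaluation
\begin{equation*}
\varrho^{\sf T} F^L(x^S(\mu);\mu) = \det(A)\, e_1^{\sf T} x^S(\mu) + \varrho^{\sf T} b\,\mu + \cO(\mu^2) = \varrho^{\sf T} b\,\mu + \cO(\mu^2).
\end{equation*}

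For the second evaluation I would exploit the pseudo-equilibrium condition. At $(\b0;0)$ one has $F^L_1 - F^R_1 = -c_1 \ne 0$, so the denominator in \eqref{eq:Fslide2} is nonzero in a neighbourhood and $F^S(x^S(\mu);\mu) = \b0$ is equivalent to $F^L_1(x^S;\mu)\,F^R(x^S;\mu) = F^R_1(x^S;\mu)\,F^L(x^S;\mu)$. Here $F^L_1(x^S(\mu);\mu)$ and $F^L(x^S(\mu);\mu)$ are each $\cO(\mu)$ --- they vanish at $\mu = 0$ because $\b0$ is an equilibrium of $F^L$ --- whereas \eqref{eq:FR} gives $F^R(x^S;\mu) = c + \cO(\mu)$ and $F^R_1(x^S;\mu) = c_1 + \cO(\mu)$. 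Consequently the $\cO(\mu)$ tails of $F^R$ and $F^R_1$ contribute only at order $\mu^2$, the relation reduces to $F^L_1(x^S;\mu)\,c = c_1\,F^L(x^S;\mu) + \cO(\mu^2)$, and, using $c_1 \ne 0$, applying $\varrho^{\sf T}$ yields $\varrho^{\sf T} F^L(x^S(\mu);\mu) = \frac{F^L_1(x^S(\mu);\mu)}{c_1}\,\varrho^{\sf T} c + \cO(\mu^2)$.

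Equating the two expressions and dividing by $\varrho^{\sf T} c$, which is nonzero by hypothesis, then produces \eqref{eq:FL1xS}. I expect the only delicate point to be the order bookkeeping in the second evaluation: one must check that $F^L_1(x^S(\mu);\mu)$ is truly $\cO(\mu)$ so that its product with the $\cO(\mu)$ parts of $F^R$ and $F^R_1$ is $\cO(\mu^2)$, and that $F^L_1 - F^R_1$ remains bounded away from zero near the BEB so that the factored form of $F^S$ in \eqref{eq:Fslide2} is legitimate there.
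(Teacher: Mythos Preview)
Your proof is correct and follows essentially the same route as the paper: both multiply the pseudo-equilibrium relation by $\varrho^{\sf T}$, use $\varrho^{\sf T} A = \det(A)\,e_1^{\sf T}$ together with $x^S_1 = 0$ to reduce $\varrho^{\sf T} F^L(x^S;\mu)$ to $\varrho^{\sf T} b\,\mu + \cO(\mu^2)$, and then solve for $F^L_1(x^S;\mu)$. The only cosmetic difference is that the paper starts from the linearised sliding field \eqref{eq:Fslide3}, whereas you clear the denominator in \eqref{eq:Fslide2} directly; your extra care in tracking the $\cO(\mu)$ contributions is a welcome addition.
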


The following proof of Lemma \ref{le:FL1xS}
closely follows that of di Bernardo {\em et.~al.}~\cite{DiBu08}.

\begin{proof}
The pseudo-equilibrium satisfies $F^S(x^S(\mu);\mu) = \b0$.
By \eqref{eq:Fslide3} we can rewrite $F^S$ as
\begin{equation}
F^S(x;\mu) = F^L(x;\mu) - \frac{F^L_1(x;\mu)}{c_1} \,c + \cO(2).
\end{equation}
By multiplying this on the left by $\varrho^{\sf T}$ and substituting $x = x^S(\mu)$ we obtain
\begin{equation}
0 = \varrho^{\sf T} F^L(x^S(\mu);\mu) - \frac{F^L_1(x^S(\mu);\mu) \varrho^{\sf T} c}{c_1} + \cO \left( \mu^2 \right).
\label{eq:FL1xSProof1}
\end{equation}
The first term in \eqref{eq:FL1xSProof1} simplifies to $\varrho^{\sf T} b \mu$ (because $x^S_1(\mu) = 0$)
and so the desired expression \eqref{eq:FL1xS} results from a simple rearrangement of \eqref{eq:FL1xSProof1}.
\end{proof}

The next result, which to the best of the author's knowledge is new,
shows how the inner product $\varrho^{\sf T} c$ is related to the Jacobian matrix $\tilde{M}$.

\begin{lemma}
Suppose $\det(A) \ne 0$ and $c_1 \ne 0$.
Then
\begin{equation}
\det(\tilde{M}) = \frac{\varrho^{\sf T} c}{c_1}.
\label{eq:dettildeM}
\end{equation}
\label{le:dettildeM}
\end{lemma}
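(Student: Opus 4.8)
The plan is to relate $\det(\tilde M)$ directly to $\det(M)$ and then compute $\det(M)$ via the matrix determinant lemma. First I would observe that $M = DF^S(\b0;0)$ has a very special first row: since $F^S_1 \equiv 0$ on the sliding region (indeed $e_1^{\sf T} M = e_1^{\sf T}(I - c e_1^{\sf T}/c_1)A = (e_1^{\sf T} - e_1^{\sf T})A = \b0$), the entire first row of $M$ vanishes. Hence $\det(M) = 0$, which is consistent with sliding dynamics being genuinely $(n-1)$-dimensional, but more useful is the cofactor expansion: expanding $\det$ along the first column, or better, exploiting that $M$ has zero first row, one sees that $\det(M + t\, e_1 e_1^{\sf T})$ — which fills in the $(1,1)$ entry with $t$ while leaving everything else fixed — equals $t \cdot \det(\tilde M)$, since after the modification the first row is $t\, e_1^{\sf T}$ and the lower-right $(n-1)\times(n-1)$ block is still $\tilde M$.

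The key step is therefore to compute $\det(M + t\, e_1 e_1^{\sf T})$ independently. Write $M + t\, e_1 e_1^{\sf T} = \left(I - \frac{c e_1^{\sf T}}{c_1}\right)A + t\, e_1 e_1^{\sf T}$. The cleanest route is to factor out $A$ on the right where possible; since $e_1^{\sf T} = e_1^{\sf T} A \cdot A^{-1}$ we can write $t\, e_1 e_1^{\sf T} = t\, e_1 (e_1^{\sf T} A) A^{-1}$, so
\begin{equation}
M + t\, e_1 e_1^{\sf T} = \left(I - \frac{c e_1^{\sf T}}{c_1} + \frac{t}{\det(A)}\, e_1 \varrho^{\sf T}\right) A,
\label{eq:Mtfactored}
\end{equation}
using $\varrho^{\sf T} = e_1^{\sf T}\,{\rm adj}(A) = \det(A)\, e_1^{\sf T} A^{-1}$. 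Taking determinants, $\det(M + t\, e_1 e_1^{\sf T}) = \det(A) \cdot \det\!\left(I - \frac{c e_1^{\sf T}}{c_1} + \frac{t}{\det(A)}\, e_1 \varrho^{\sf T}\right)$, and the remaining determinant is of the identity plus a sum of two rank-one matrices, which the matrix determinant lemma (or its rank-two version) evaluates explicitly in terms of the scalars $e_1^{\sf T} c$, $\varrho^{\sf T} e_1$, $\varrho^{\sf T} c$, and $e_1^{\sf T} e_1$. Noting $e_1^{\sf T} c = c_1$ and $\varrho^{\sf T} e_1 = e_1^{\sf T}{\rm adj}(A) e_1 = \det(\tilde A_{11})$-type cofactor terms, I expect massive cancellation: the coefficient of $t^0$ should vanish (recovering $\det M = 0$), and the coefficient of $t$ should collapse to $\varrho^{\sf T} c / (c_1 \det(A))$, so that $\det(M + t\, e_1 e_1^{\sf T}) = t\, \varrho^{\sf T} c / c_1$. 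Comparing with $t\,\det(\tilde M)$ gives the result.

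The main obstacle is the bookkeeping in the rank-two matrix determinant expansion: one must track a $2\times2$ determinant of inner products and verify that the cross terms cancel to leave exactly $\varrho^{\sf T} c / c_1$. An alternative that sidesteps the rank-two lemma — and which I would try first if the direct computation gets messy — is to apply the standard (rank-one) matrix determinant lemma in stages, or to note that $I - c e_1^{\sf T}/c_1$ is a projection-type matrix whose determinant and action on the relevant subspace can be handled directly; one can also absorb $t$ into $c$ by writing the modified first factor as $I - \hat c e_1^{\sf T}/c_1$ for a shifted vector $\hat c$, reducing everything to a single rank-one update. Either way, the identities $\varrho^{\sf T} A = \det(A)\, e_1^{\sf T}$ and $e_1^{\sf T} M = \b0$ are the structural facts doing the real work, and the transversality hypothesis $c_1 \neq 0$ is exactly what makes $M$ (and the factorization) well-defined.
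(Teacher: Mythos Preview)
Your approach is correct and shares the two key structural observations with the paper's proof---that $e_1^{\sf T} M = \b0$ and that the matrix determinant lemma is the right tool---but the execution differs. The paper introduces its scalar parameter through the characteristic polynomial: from $\det(\lambda I - M) = \lambda\,\det(\lambda I - \tilde M)$ it suffices to find the coefficient of $\lambda$ in $\det(\lambda I - M)$, and writing $\lambda I - M = (\lambda I - A) + \frac{c}{c_1}\,e_1^{\sf T} A$ the paper applies the \emph{rank-one} lemma with $X = \lambda I - A$, then expands $(\lambda I - A)^{-1} = -A^{-1} - A^{-2}\lambda + \cO(\lambda^2)$ to read off that coefficient. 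Your device---perturbing $M$ by $t\,e_1 e_1^{\sf T}$ to fill the $(1,1)$ slot, then factoring $A$ out on the right---is a genuinely different and arguably cleaner route. In particular, your worry about the rank-two bookkeeping is unwarranted: in the $2\times 2$ Sylvester determinant $\det(I_2 + V^{\sf T} U)$ one has $1 + e_1^{\sf T}(-c/c_1) = 0$, so the $(1,1)$ entry vanishes and the whole thing collapses to $-\bigl(\tfrac{t}{\det A}\bigr)\bigl(-\tfrac{\varrho^{\sf T} c}{c_1}\bigr)$, with the cofactor $\varrho^{\sf T} e_1$ never appearing. Multiplying back by $\det(A)$ and matching with $t\,\det(\tilde M)$ gives the result immediately, and you avoid the $(-1)^{n+1}$ sign-tracking that the paper's characteristic-polynomial route incurs at the end.
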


\begin{proof}
Since the first row of $M$ is $\b0^{\sf T}$,
the characteristic polynomials of $M$ and $\tilde{M}$ are related by
\begin{equation}
\det(\lambda I - M) = \lambda \det(\lambda I - \tilde{M}).
\label{eq:dettildeMProof1}
\end{equation}
To evaluate $\det(\lambda I - M)$, we use the matrix determinant lemma:
$\det(X + v u^{\sf T}) = \det(X) \left( 1 + u^{\sf T} X^{-1} v \right)$,
for any non-singular $n \times n$ matrix $X$ and $u, v \in \mathbb{R}^n$.
From \eqref{eq:M} we obtain
\begin{equation}
\det(\lambda I - M) = \det(\lambda I - A)
\left( 1 + \frac{e_1^{\sf T} A (\lambda I - A)^{-1} c}{c_1} \right).
\nonumber
\end{equation}
By then substituting $(\lambda I - A)^{-1} = -A^{-1} - A^{-2} \lambda + \cO \left( \lambda^2 \right)$,
we produce
\begin{equation}
\det(\lambda I - M) = -\frac{\det(-A)}{c_1}
\,e_1^{\sf T} A^{-1} c \lambda + \cO \left( \lambda^2 \right).
\nonumber
\end{equation}
By \eqref{eq:varrho} this reduces to
\begin{equation}
\det(\lambda I - M) = \frac{(-1)^{n+1} \varrho^{\sf T} c}{c_1} \,\lambda + \cO \left( \lambda^2 \right).
\nonumber
\end{equation}
Thus by \eqref{eq:dettildeMProof1} we have
$\det(-\tilde{M}) = \frac{(-1)^{n+1} \varrho^{\sf T} c}{c_1}$,
and hence \eqref{eq:dettildeM}, as required.
\end{proof}

By combining \eqref{eq:FL1xS}, \eqref{eq:dettildeM}, and
$F^R_1(x^S(\mu);\mu) = c_1 + \cO(\mu)$, we arrive at
\begin{equation}
\alpha_S = \frac{\varrho^{\sf T} b \,c_1}{\det(\tilde{M})}.
\label{eq:alphaR2}
\end{equation}
To complete the proof of Theorem \ref{th:Feigin}, we combine \eqref{eq:alphaL2} and \eqref{eq:alphaR2} to obtain
\begin{equation}
{\rm sgn} \left( \alpha_L \alpha_S \right) =
-{\rm sgn} \left( \det(A) \det(\tilde{M}) \,c_1 \right).
\label{eq:FeiginProof9}
\end{equation}
Notice $(-1)^{N_L} = {\rm sgn}(\det(-A))$,
and $(-1)^{N_S} = {\rm sgn}(\det(-\tilde{M}))$.
Also $\det(A) = (-1)^n \det(-A)$, and $\det(\tilde{M}) = (-1)^{n+1} \det(-\tilde{M})$.
Therefore \eqref{eq:FeiginProof9} is equivalent to the given formula \eqref{eq:Feigin}.

\section{A remark on codimension-two BEBs}
\label{sec:codim2}

The above results add insight into some codimension-two BEBs.
Suppose the two-dimensional parameter space of a Filippov system has a curve of BEBs.
Further suppose that at a point on this curve the BEBs change from persistence to a nonsmooth-fold.
That is, one of $\alpha_L$ and $\alpha_S$ changes sign.
The case that $\alpha_L$ changes sign was unfolded for a simple system
by di Bernardo {\em et.~al.}~\cite{DiNo08};
the case that $\alpha_S$ changes sign was unfolded in a general setting
by Della Rossa and Dercole \cite{DeDe12}. 
In both cases a curve of saddle-node bifurcations emanates from the codimension-two point with a quadratic tangency.
This is illustrated in Fig.~\ref{fig:unfoldingBEBSN_framework}.
The same unfolding occurs for the analogous scenario in continuous piecewise-smooth systems \cite{SiKo09}.

\begin{figure}[h!]
\begin{center}
\setlength{\unitlength}{1cm}
\begin{picture}(5.2,3.9)
\put(0,0){\includegraphics[width=5.2cm]{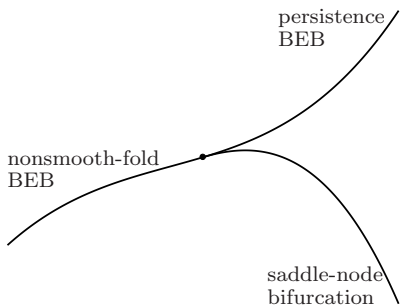}}
\put(0,1.85){\scriptsize nonsmooth-fold}
\put(0,1.55){\scriptsize BEB}
\put(3.6,3.73){\scriptsize persistence}
\put(3.6,3.43){\scriptsize BEB}
\put(3.46,.3){\scriptsize saddle-node}
\put(3.46,0){\scriptsize bifurcation}
\end{picture}
\caption{
A sketch of a typical two-parameter bifurcation diagram of a Filippov system
about a codimension-two point where a curve of BEBs changes from persistence to nonsmooth-fold type.
\label{fig:unfoldingBEBSN_framework}
} 
\end{center}
\end{figure}

These unfoldings assume that in a neighbourhood of the codimension-two point
the transversality condition of the BEBs is satisfied (i.e.~$\varrho^{\sf T} b \ne 0$).
It is also assumed that, locally, the system without a regular equilibrium
has no tangency with the discontinuity surface (i.e.~$c_1 \ne 0$).
Then by \eqref{eq:alphaL2} and \eqref{eq:alphaR2},
$\alpha_L$ and $\alpha_S$ must change sign, not by becoming zero,
but rather by going `through infinity' by either $\det(A) = 0$ or $\det(\tilde{M}) = 0$.
This tells us that at the codimension-two point one of the equilibria has a zero eigenvalue,
and this provides an explanation for the presence of saddle-node bifurcations.

\section{A normal form}
\label{sec:normalForm}

Here we introduce the normal form
\begin{equation}
\dot{x} = \begin{cases}
C x + e_n \mu, & x_1 < 0, \\
d, & x_1 > 0,
\end{cases}
\label{eq:normalForm}
\end{equation}
where $C$ is the companion matrix
\begin{equation}
C = \begin{bmatrix}
-a_1 & 1 \\
-a_2 && \ddots \\
\vdots &&& 1 \\
-a_n
\end{bmatrix},
\label{eq:companionMatrix}
\end{equation}
and $d \in \mathbb{R}^n$ with $d_1 = \pm 1$.
The $a_i \in \mathbb{R}$ are the coefficients of the characteristic polynomial of $C$:
\begin{equation}
\det(\lambda I - C) = \lambda^n + a_1 \lambda^{n-1} + \cdots + a_{n-1} \lambda + a_n \,.
\label{eq:ai}
\end{equation}

The following theorem justifies our interpretation of \eqref{eq:normalForm} as a `normal form' for BEBs
by providing a coordinate transformation from a system in the general form \eqref{eq:F2} to \eqref{eq:normalForm}.
Since \eqref{eq:F2} is piecewise-smooth, it may tempting to apply different coordinate transformations
to the two pieces of \eqref{eq:F2}.
However, this may alter the sliding dynamics in a fundamental way.
Indeed, as we will see, a single coordinate transformation is sufficient.
In the following result, $J$ denotes the companion matrix \eqref{eq:companionMatrix}
for which $a_i = 0$ for all $i$.

\begin{theorem}
Consider a system of the form \eqref{eq:F2} with $c_1 \ne 0$.
Let $a_1,\ldots,a_n \in \mathbb{R}$ be the coefficients of the characteristic polynomial of $A$
(matching \eqref{eq:ai}).
Let
\begin{equation}
\begin{split}
\Psi &= \begin{bmatrix}
1 \\
a_1 & 1 \\
\vdots & \ddots & \ddots \\
a_{n-1} & \cdots & a_1 & 1
\end{bmatrix}, \\
\Phi &= \begin{bmatrix}
e_1^{\sf T} \\
e_1^{\sf T} A \\
\vdots \\
e_1^{\sf T} A^{n-1}
\end{bmatrix},
\end{split}
\label{eq:PsiPhi}
\end{equation}
and
\begin{equation}
\begin{split}
Q &= \Psi \Phi, \\
r &= J^{\sf T} Q b, \\
s &= e_n^{\sf T} Q b.
\end{split}
\label{eq:Qrs}
\end{equation}
If $\det(\Phi) \ne 0$ and $s \ne 0$, then under the coordinate transformation
\begin{equation}
\begin{split}
x &\mapsto Q x + r \mu, \\
\mu &\mapsto s \mu,
\end{split}
\label{eq:transocf}
\end{equation}
the system \eqref{eq:F2} becomes
\begin{equation}
\dot{x} = \begin{cases}
C x + e_n \mu + \cO(2), & x_1 < 0, \\
d + \cO(1), & x_1 > 0,
\end{cases}
\label{eq:transformedSystem}
\end{equation}
where $C$ is given by \eqref{eq:companionMatrix} and $d = Q c$.
The additional transformation, $x \mapsto \frac{1}{|d_1|} x$ and
$\mu \mapsto \frac{1}{|d_1|} \mu$,
scales the first element of $d$ to $\pm 1$.
\label{th:normalForm}
\end{theorem}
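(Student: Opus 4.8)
The plan is to verify by direct computation that the prescribed affine change of variables $(x,\mu)\mapsto(Qx+r\mu,\,s\mu)$ does the job, the crucial structural point being that it is a genuine (linear, invertible) change of coordinates near $(\b0,0)$ and that it leaves the switching surface $\{x_1=0\}$ fixed, so that the piecewise structure of \eqref{eq:F2} is preserved with the discontinuity still at the first coordinate. Invertibility is immediate: the transformation has Jacobian determinant $s\det(Q)=s\det(\Psi)\det(\Phi)=s\det(\Phi)\ne0$ by hypothesis, since $\det(\Psi)=1$. First I would record two easy structural facts. The first rows of both $\Psi$ and $\Phi$ equal $e_1^{\sf T}$, hence the first row of $Q=\Psi\Phi$ is $e_1^{\sf T}$, and therefore $e_1^{\sf T}Q^{-1}=e_1^{\sf T}$ as well. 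Also, writing $a=(a_1,\dots,a_n)^{\sf T}$, the companion matrix of \eqref{eq:companionMatrix} satisfies $C=J-ae_1^{\sf T}$. Since $r=J^{\sf T}Qb$ has vanishing first component (because $e_1^{\sf T}J^{\sf T}=(Je_1)^{\sf T}=\b0^{\sf T}$), the new first coordinate is $e_1^{\sf T}(Qx+r\mu)=x_1$, so indeed the sign of $x_1$ is preserved.

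The heart of the proof is the identity $QA=CQ$, that is, $Q$ conjugates $A$ to the companion matrix $C$. I would prove this by comparing rows. Using $\Psi_{ij}=a_{i-j}$ (with the convention $a_0=1$), the $i$-th row of $Q$ is $q_i^{\sf T}=e_1^{\sf T}p_{i-1}(A)$, where $p_m(\lambda)=\lambda^m+a_1\lambda^{m-1}+\cdots+a_m$ is the degree-$m$ truncation of the characteristic polynomial of $A$; in particular $p_n$ is that characteristic polynomial. Multiplying on the right by $A$ and using $\lambda p_m(\lambda)=p_{m+1}(\lambda)-a_{m+1}$ gives $q_i^{\sf T}A=q_{i+1}^{\sf T}-a_iq_1^{\sf T}$ for $i<n$, while for $i=n$ the Cayley--Hamilton theorem ($p_n(A)=0$) gives $q_n^{\sf T}A=-a_nq_1^{\sf T}$. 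Reading the rows of $CQ$ off the structure of $C$ in \eqref{eq:companionMatrix} produces exactly these same expressions, so $QA=CQ$, and hence $QAQ^{-1}=C$ since $Q$ is invertible. (This is the classical construction of the observable canonical form, with $\det(\Phi)\ne0$ playing the role of the observability condition.)

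With $QAQ^{-1}=C$ in hand the rest is bookkeeping. On the branch $x_1<0$ of \eqref{eq:F2}, writing $w=Qb$, the new state $y=Qx+r\mu$ satisfies $\dot y=Q\dot x=QAQ^{-1}(y-r\mu)+w\mu+\cO(2)=Cy+(w-Cr)\mu+\cO(2)$; using $C=J-ae_1^{\sf T}$ together with $e_1^{\sf T}J^{\sf T}=\b0^{\sf T}$ and $JJ^{\sf T}=I-e_ne_n^{\sf T}$ gives $Cr=CJ^{\sf T}w=(I-e_ne_n^{\sf T})w=w-(e_n^{\sf T}Qb)e_n=w-se_n$, so $w-Cr=se_n$ and the linear term is $e_n(s\mu)$, which the rescaling $\mu\mapsto s\mu$ turns into $e_n\mu$. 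On the branch $x_1>0$ we simply obtain $\dot y=Q\dot x=Qc+\cO(1)=d+\cO(1)$ with $d=Qc$, and $d_1=e_1^{\sf T}Qc=c_1\ne0$ automatically. Finally the uniform scaling $x\mapsto x/|d_1|$, $\mu\mapsto\mu/|d_1|$ commutes with $C$, fixes the term $e_n\mu$, leaves the $\cO$-remainders of the stated orders, and rescales $d$ so that $d_1={\rm sgn}(c_1)=\pm1$, yielding \eqref{eq:transformedSystem} and hence the normal form \eqref{eq:normalForm}.

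The main obstacle — in fact essentially the only step that is not mechanical — is the verification $QAQ^{-1}=C$; everything else is routine manipulation of the affine transformation together with the observation that it fixes $\{x_1=0\}$. Within that step the one point to be careful about is that the ``wrap-around'' in the last row, $q_n^{\sf T}A=-a_nq_1^{\sf T}$, is precisely what Cayley--Hamilton supplies, and that the hypotheses $\det(\Phi)\ne0$ and $s\ne0$ are used exactly where invertibility is needed (of $Q$ and of the parameter rescaling, respectively), while $d_1\ne0$ is inherited for free from $c_1\ne0$.
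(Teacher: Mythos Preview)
Your proof is correct. The paper itself does not prove this theorem: it explicitly states that no proof is given, deferring instead to \cite{Si18} as the result is ``a trivial generalisation'' of the continuous case. So there is no paper proof to compare against in detail; what you have written is exactly the kind of direct verification the paper gestures at. Your argument follows the standard observable-canonical-form construction: the identity $QA=CQ$ via the truncated characteristic polynomials and Cayley--Hamilton is the classical step, and your observations that $e_1^{\sf T}Q=e_1^{\sf T}$ and $e_1^{\sf T}J^{\sf T}=\b0^{\sf T}$ cleanly establish that the switching surface $\{x_1=0\}$ is preserved, matching the paper's remark to that effect after the theorem. The computation $CJ^{\sf T}=JJ^{\sf T}=I-e_ne_n^{\sf T}$ is a tidy way to get $w-Cr=se_n$, and your identification of where each hypothesis ($\det(\Phi)\ne0$, $s\ne0$, $c_1\ne0$) is actually used is accurate.
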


The normal form \eqref{eq:normalForm} results from removing
higher order terms from \eqref{eq:transformedSystem}.
We do not provide a proof of Theorem \ref{th:normalForm}
as it is a trivial generalisation of that for continuous piecewise-smooth systems \cite{Si18}.
The use of companion matrices stems from control theory \cite{DiBu08}.
Indeed, $\Phi$ is called an {\em observability matrix}
and, as in the continuous setting,
we say that the system \eqref{eq:F2} is `observable' if $\det(\Phi) \ne 0$.

It is typical for a given system \eqref{eq:F2} to be observable.
Indeed the `Popov-Belevitch-Hautus observability test' \cite{So98}
(a control theory result)
tells us that \eqref{eq:F2} is observable if and only if
$A$ does not have an eigenvector orthogonal to $e_1$ \cite{Si16}.
For the unfolding of a codimension-two BEB at which
$A$ has an eigenvector orthogonal to $e_1$,
refer to Section 8 of Guardia {\em et.~al.}~\cite{GuSe11}.

Note that $e_1^{\sf T} Q = e_1^{\sf T}$ and $r_1 = 0$,
due to the way $\Psi$ and $\Phi$ are defined.
Hence the coordinate transformation \eqref{eq:transocf} leaves $x_1$ unchanged, and $d_1 = c_1$.
Thus the assumption $c_1 \ne 0$ ensures that the additional transformation
in Theorem \ref{th:normalForm} is well-defined.


\section{Properties of the normal form}
\label{sec:properties}

Here we study \eqref{eq:normalForm} (where $d_1 = \pm 1$).
If $a_n \ne 0$ (equivalently, if $\det(C) \ne 0$),
then \eqref{eq:normalForm} has the unique regular equilibrium
$x^L(\mu) = -e_1^{\sf T} C^{-1} e_n \mu$.
In particular, $x^L_1(\mu) = \frac{1}{a_n} \,\mu$,
thus $x^L$ moves linearly away from the discontinuity surface $\Sigma$ as $\mu$ is varied from $0$.
That is, the transversality condition is automatically satisfied in the normal form.
This tells us that if the coordinate transformation from \eqref{eq:F2} to \eqref{eq:normalForm} can be achieved,
then the transversality condition needs to be satisfied
in \eqref{eq:F2} (this condition is $\varrho^{\sf T} b \ne 0$).
Indeed, via direct calculations and the Cayley Hamilton theorem,
it can be shown that $s = (-1)^{n+1} \varrho^{\sf T} b$,
and $s \ne 0$ is required in Theorem \ref{th:normalForm}.

Next we study the sliding dynamics of \eqref{eq:normalForm}.
For the $x_1 < 0$ component of \eqref{eq:normalForm},
with $x_1 = 0$ we have $\dot{x}_1 = x_2$.
Thus if $d_1 = -1$, then $x_2 > 0$ is an attracting sliding region and $x_2 < 0$ is a crossing region.
If instead $d_1 = 1$, then $x_2 > 0$ is a crossing region and $x_2 < 0$ is a repelling sliding region.

Sliding dynamics are governed by \eqref{eq:Fslide2} applied to \eqref{eq:normalForm}.
If we are only interested in the paths that orbits take, not evolution times,
we can scale time in a spatially dependent way so that the denominator of \eqref{eq:Fslide2}
changes from $F^L_1 - F^R_1$ to $-F^R_1$.
This is a common strategy for dealing with sliding motion \cite{Fi88},
and particularly beneficial here as the resulting scaled sliding vector field is linear, specifically:
\begin{equation}
\begin{bmatrix} \dot{x}_2 \\ \vdots \\ \dot{x}_n \end{bmatrix} =
\tilde{M} \begin{bmatrix} x_2 \\ \vdots \\ x_n \end{bmatrix} +
\begin{bmatrix} 0 \\ \vdots \\ 0 \\ \mu \end{bmatrix},
\label{eq:FslideNormalForm2}
\end{equation}
where
\begin{equation}
\tilde{M} = \begin{bmatrix}
-\frac{d_2}{d_1} & 1 \\
-\frac{d_3}{d_1} && \ddots \\
\vdots &&& 1 \\
-\frac{d_n}{d_1}
\end{bmatrix}.
\label{eq:tildeM}
\end{equation}
Notice that \eqref{eq:FslideNormalForm2} has the same form as
the $x_1 < 0$ component of \eqref{eq:normalForm}, except it is of one less dimension.
In particular, the coefficients of the characteristic polynomial of $\tilde{M}$ are
$\frac{d_2}{d_1},\ldots,\frac{d_n}{d_1}$.


Sliding motion ceases at $x_2 = 0$.
Here \eqref{eq:FslideNormalForm2} has $\dot{x}_2 = x_3$ (assuming $n \ge 3$).
Thus the direction of sliding motion at $x_2 = 0$ is governed by the sign of $x_3$.
In particular, if $d_1 = -1$ then sliding orbits can only escape the attracting sliding region $x_2 > 0$
at points on $x_2 = 0$ with $x_3 < 0$.

Since \eqref{eq:normalForm} and \eqref{eq:FslideNormalForm2} have no quadratic or higher order terms,
the structure of the dynamics is independent of the magnitude of $\mu$.
All bounded invariant sets collapse linearly to the origin as $\mu \to 0$,
and to understand the dynamics it suffices to consider $\mu \in \{ -1, 0, 1 \}$.

\section{BEBs in two dimensions}
\label{sec:2d}

\begin{figure*}
\begin{center}
\setlength{\unitlength}{1cm}
\begin{picture}(13.7,18.4)
\put(3.3,17.8){\small \parbox{40mm}{
$d_2 < 0$\\ (sliding motion approaches the origin)}}
\put(9,17.8){\small \parbox{40mm}{
$d_2 > 0$\\ (sliding motion heads away from the origin)}}
\put(2.8,13.2){\includegraphics[width=5.2cm]{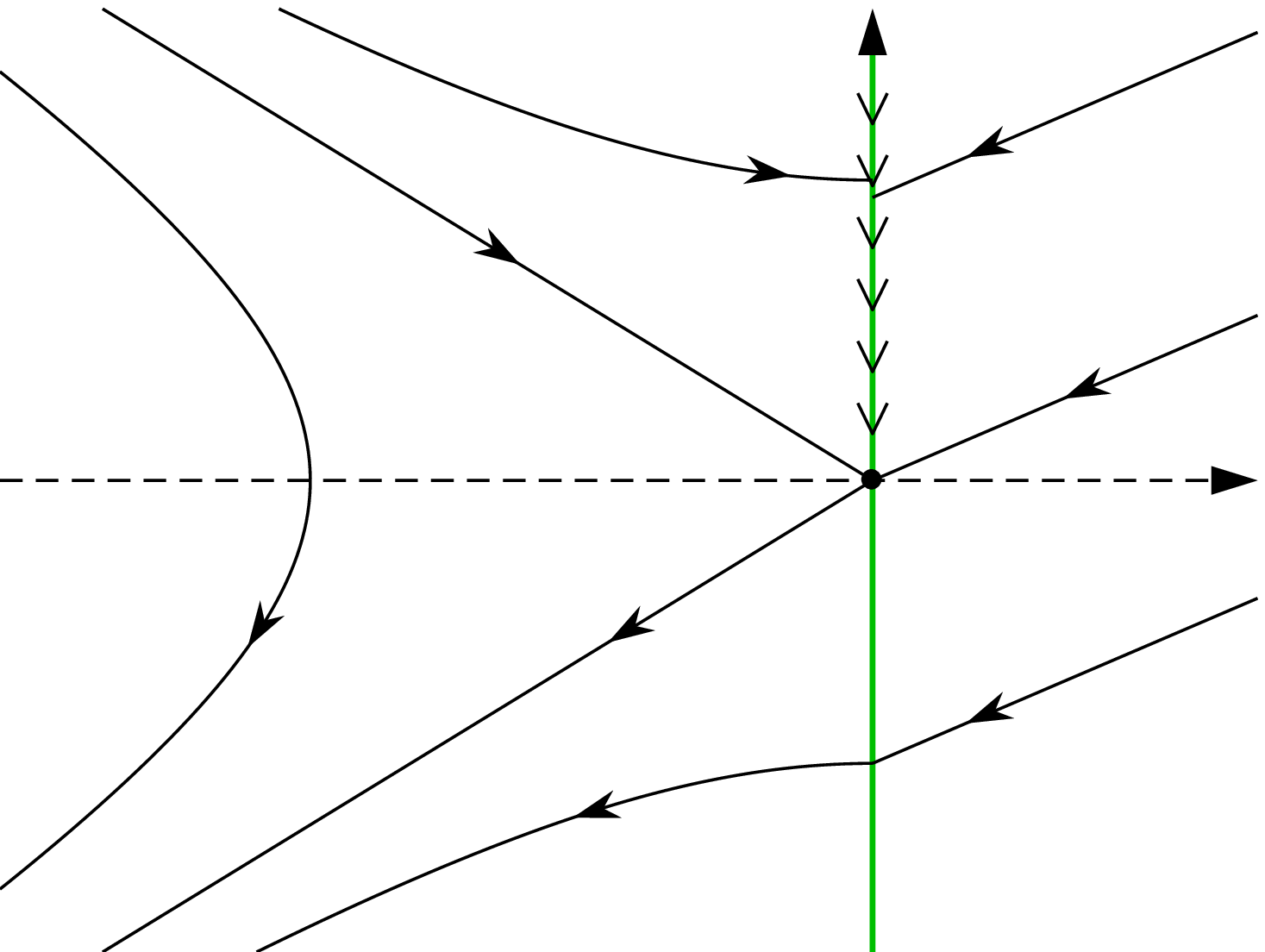}}
\put(8.5,13.2){\includegraphics[width=5.2cm]{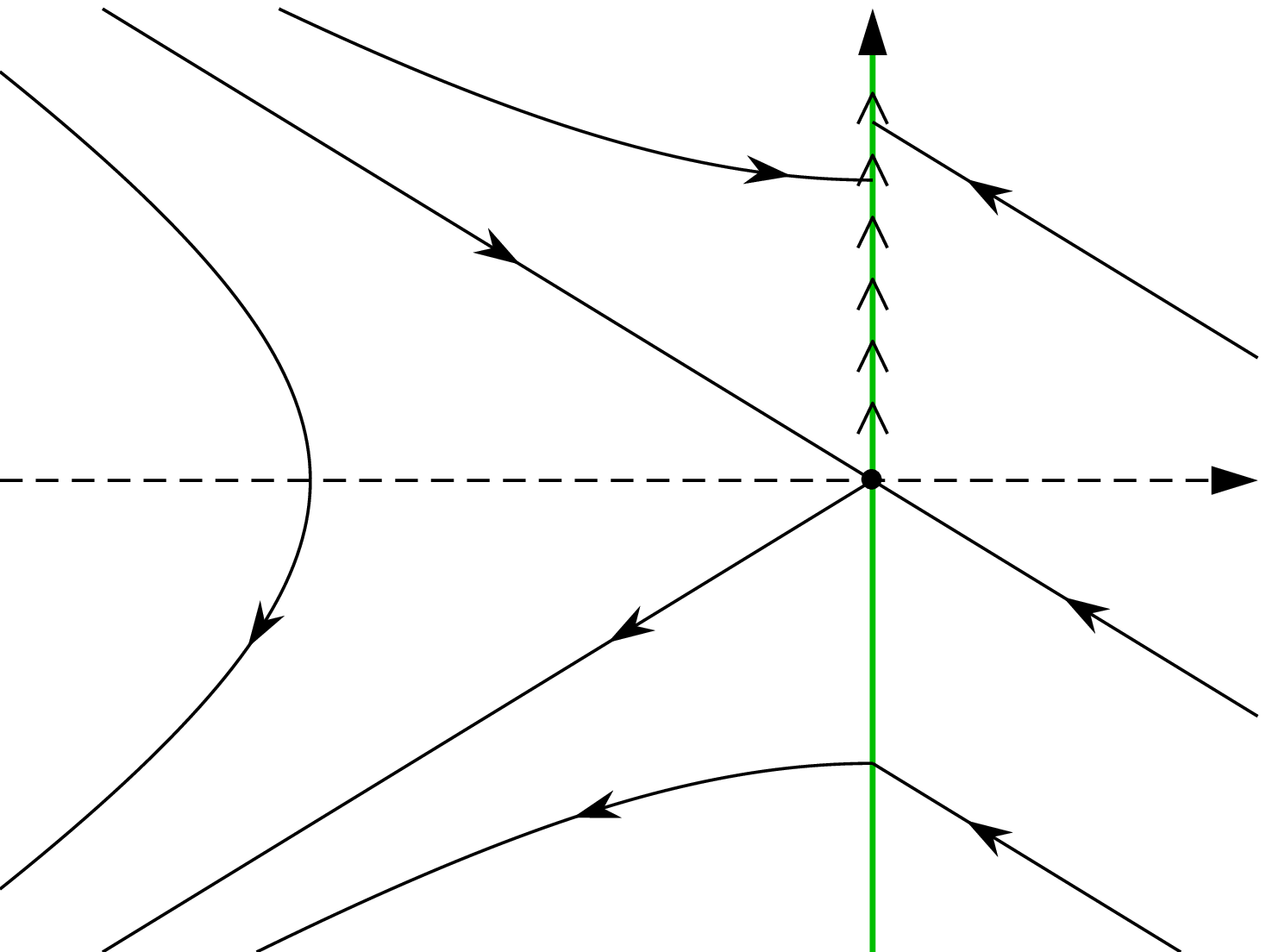}}
\put(0,15.32){\small $\delta_L < 0$}
\put(0,14.92){\small (saddle)}
\put(7.7,15.28){\footnotesize $x_1$}
\put(6.5,16.85){\footnotesize $x_2$}
\put(13.4,15.28){\footnotesize $x_1$}
\put(12.2,16.85){\footnotesize $x_2$}
\put(2.8,8.8){\includegraphics[width=5.2cm]{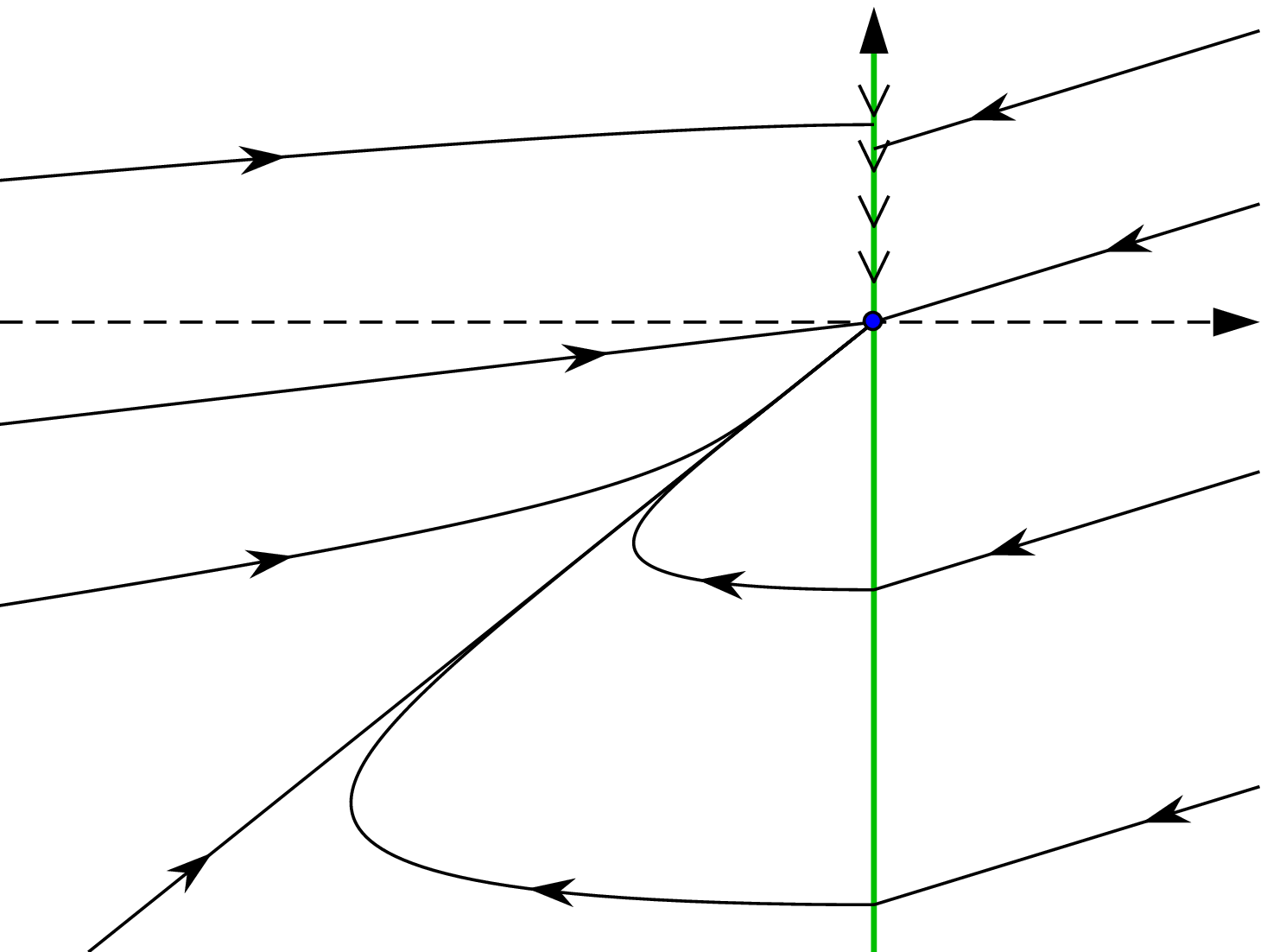}}
\put(8.5,8.8){\includegraphics[width=5.2cm]{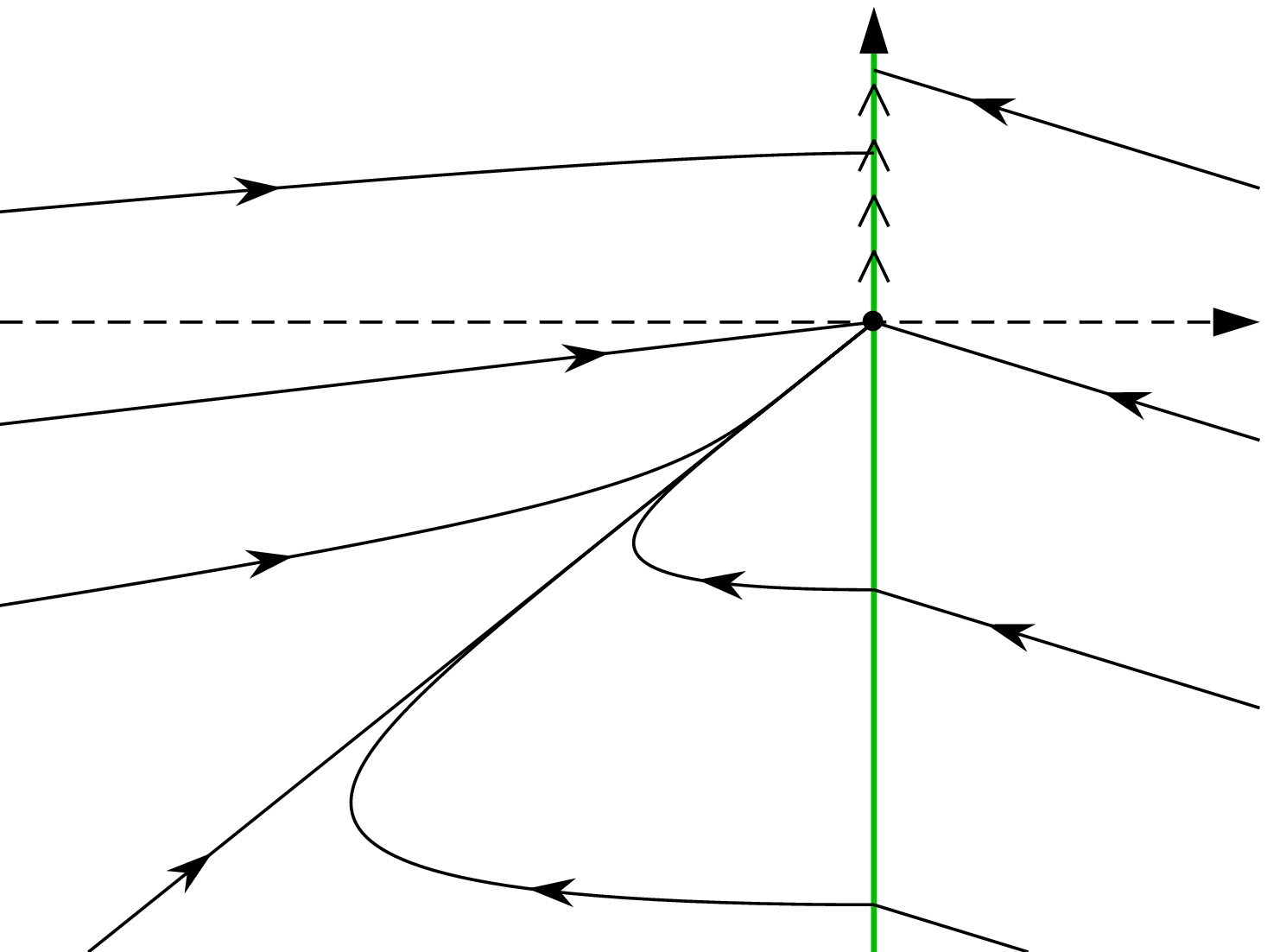}}
\put(0,11.12){\small $\tau_L < 0$}
\put(0,10.72){\small $0 < \delta_L < \frac{\tau_L^2}{4}$}
\put(0,10.32){\small (attracting node)}
\put(7.7,11.53){\footnotesize $x_1$}
\put(6.5,12.45){\footnotesize $x_2$}
\put(13.4,11.53){\footnotesize $x_1$}
\put(12.2,12.45){\footnotesize $x_2$}
\put(2.8,4.4){\includegraphics[width=5.2cm]{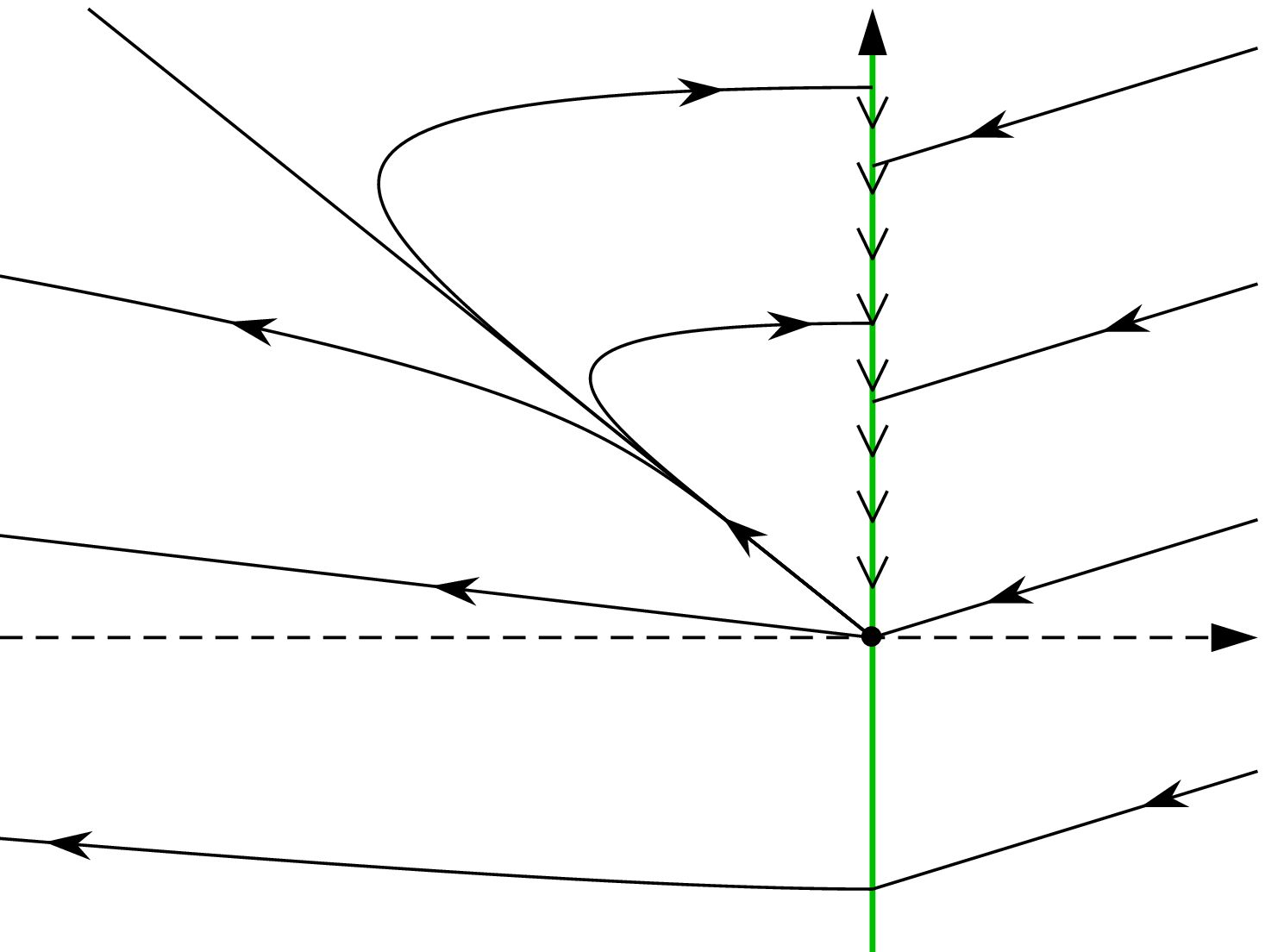}}
\put(8.5,4.4){\includegraphics[width=5.2cm]{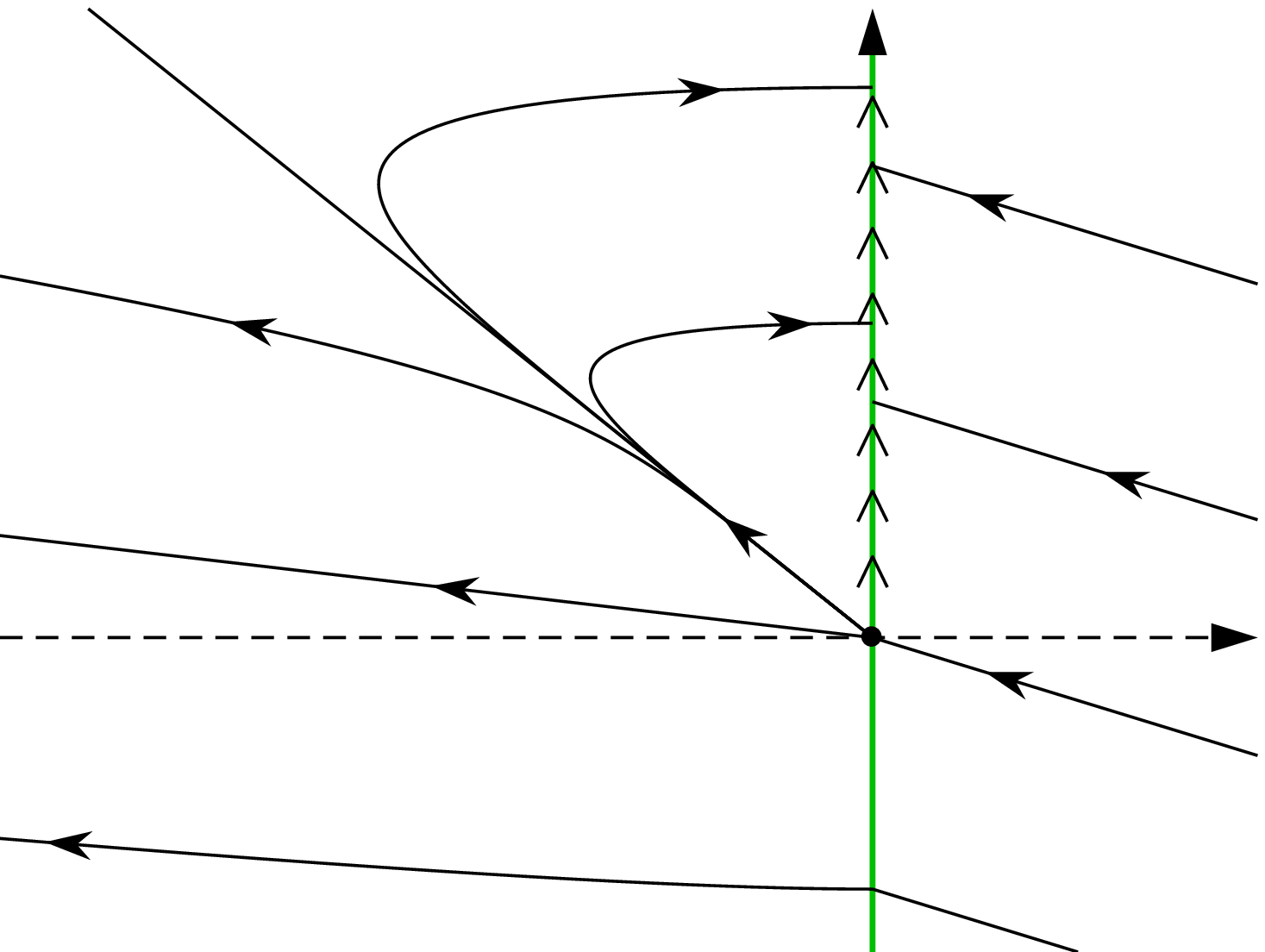}}
\put(0,6.72){\small $\tau_L > 0$}
\put(0,6.32){\small $0 < \delta_L < \frac{\tau_L^2}{4}$}
\put(0,5.92){\small (repelling node)}
\put(7.7,5.83){\footnotesize $x_1$}
\put(6.5,8.05){\footnotesize $x_2$}
\put(13.4,5.83){\footnotesize $x_1$}
\put(12.2,8.05){\footnotesize $x_2$}
\put(2.8,0){\includegraphics[width=5.2cm]{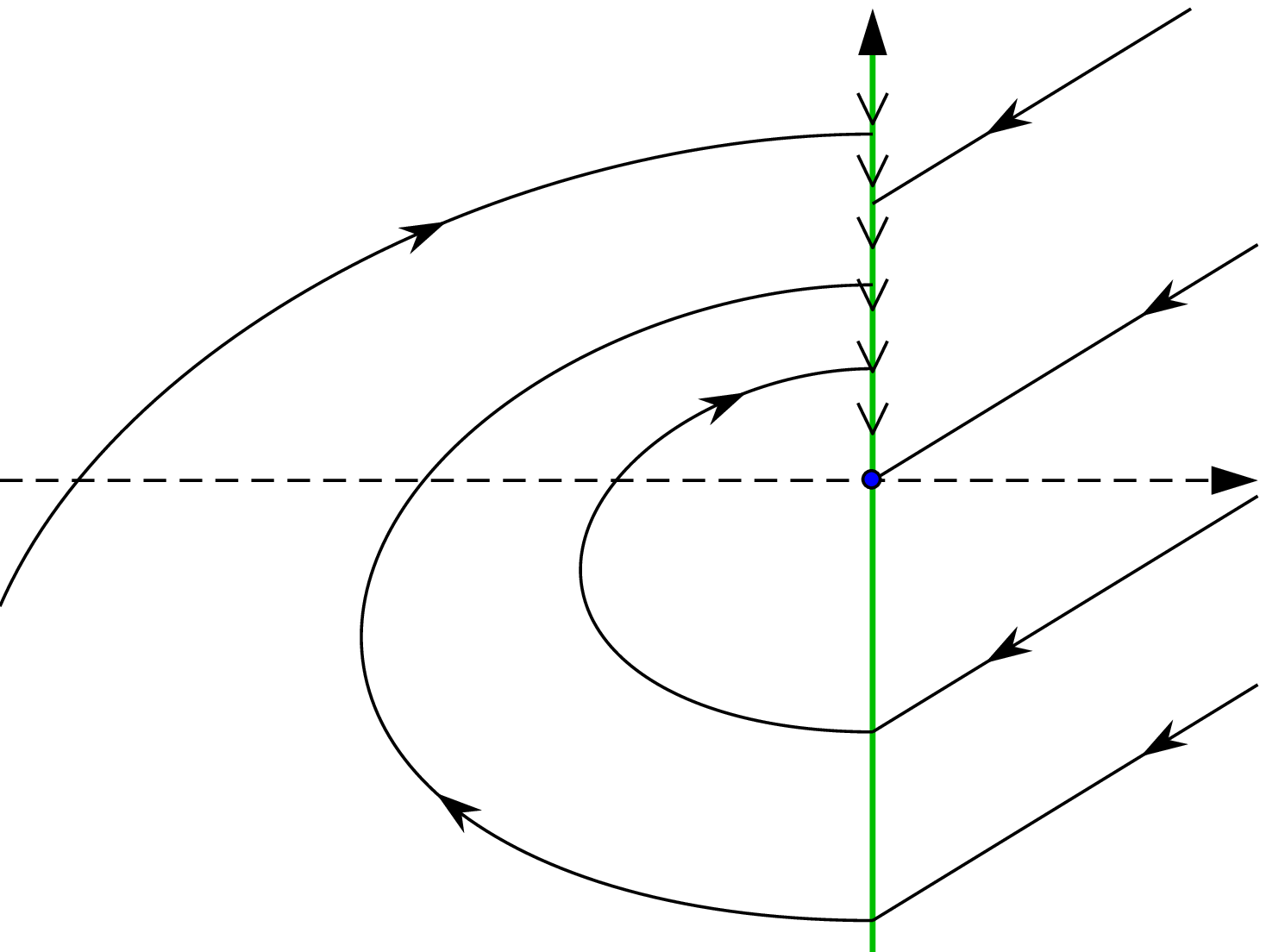}}
\put(8.5,0){\includegraphics[width=5.2cm]{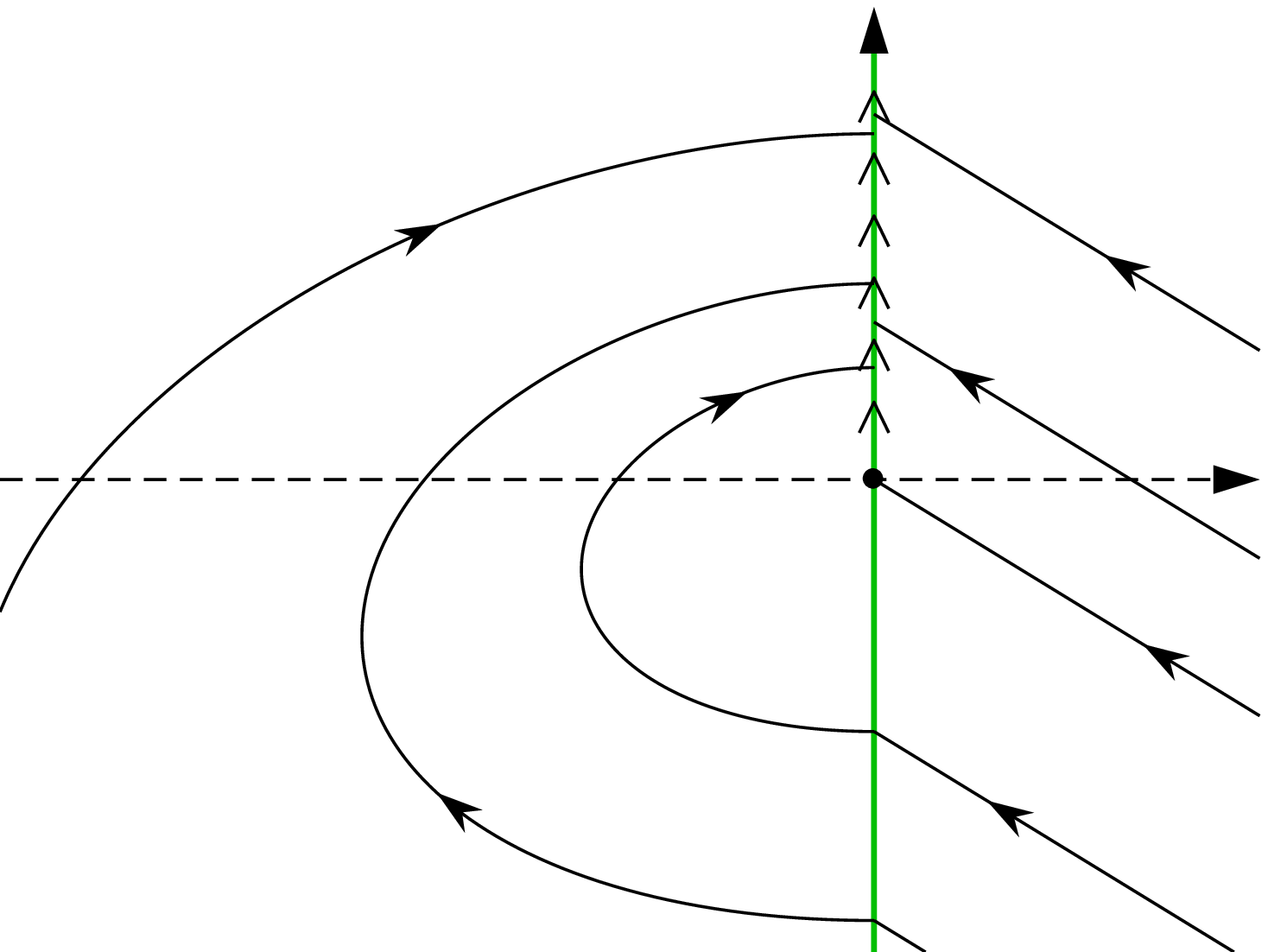}}
\put(0,2.12){\small $\delta_L > \frac{\tau_L^2}{4}$}
\put(0,1.72){\small (focus)}
\put(7.7,2.08){\footnotesize $x_1$}
\put(6.5,3.65){\footnotesize $x_2$}
\put(13.4,2.08){\footnotesize $x_1$}
\put(12.2,3.65){\footnotesize $x_2$}
\end{picture}
\caption{Filippov's eight scenarios for the dynamics local to a non-degenerate boundary equilibrium in two dimensions
as exemplified by the normal form \eqref{eq:normalForm} with $d_1 = -1$.
\label{fig:ppBEBFilippov_framework}
} 
\end{center}
\end{figure*}

Here we consider \eqref{eq:normalForm} in two dimensions with $d_1 = -1$
(the case $d_1 = 1$ can be understood via a reversal of time).
We write
\begin{equation}
C = \begin{bmatrix} \tau_L & 1 \\ -\delta_L & 0 \end{bmatrix},
\nonumber
\end{equation}
so that $\tau_L$ and $\delta_L$ are the trace and determinant of $C$.
By \eqref{eq:tildeM}, we have $\tilde{M} = d_2$ (a scalar).
In summary, \eqref{eq:normalForm} has three parameters ($\tau_L, \delta_L, d_2 \in \mathbb{R}$)
in addition to the BEB parameter $\mu \in \mathbb{R}$.
The scaled sliding vector field is
\begin{equation}
\dot{x}_2 = d_2 x_2 + \mu.
\nonumber
\end{equation}

With $\mu = 0$, the origin is a boundary equilibrium.
In two dimensions there are eight topologically distinct non-degenerate scenarios
for the dynamics local to a boundary equilibrium \cite{Fi88}.
These are nicely characterised by the parameters in our normal form.
The regular equilibrium $x^L$ is
(i) a saddle if $\delta_L < 0$,
(ii) an attracting node if $\tau_L < 0$ and $0 < \delta_L < \frac{\tau_L^2}{4}$,
(iii) a repelling node if $\tau_L > 0$ and $0 < \delta_L < \frac{\tau_L^2}{4}$, and
(iv) a focus if $\delta_L > \frac{\tau_L^2}{4}$.
Sliding motion is directed towards the origin if $d_2 < 0$,
and away from the origin if $d_2 > 0$.
These two sets of criteria are independent and
by combining them we obtain the eight generic scenarios illustrated in Fig.~\ref{fig:ppBEBFilippov_framework}.

These eight boundary equilibria can be unfolded by varying the value of $\mu$ in the normal form.
Some of these have multiple unfoldings.
For instance the lower-left boundary equilibrium of Fig.~\ref{fig:ppBEBFilippov_framework} has two generic unfoldings:
an attracting limit cycle exists for $\mu < 0$ if the focus is repelling;
no limit cycle exists if the focus is attracting.
The upper-left boundary equilibrium has two unfoldings,
the lower-right boundary equilibrium has three unfoldings,
and the remainder have one unfolding (see \cite{Gl16d,HoHo16} for details).
Thus there are $12$ generic BEBs in two dimensions.

\section{BEBs in three dimensions}
\label{sec:3d}

Here we consider \eqref{eq:normalForm} in three dimensions with $d_1 = -1$.
We write
\begin{equation}
C = \begin{bmatrix} \tau_L & 1 & 0 \\ -\sigma_L & 0 & 1 \\ \delta_L & 0 & 0 \end{bmatrix},
\nonumber
\end{equation}
so that $\tau_L$, $\sigma_L$, and $\delta_L$ are the
trace, second trace, and determinant of $C$ (see Appendix C of Simpson \cite{Si17d}).
We write
\begin{equation}
d = \begin{bmatrix} -1 \\ \tau_S \\ -\delta_S \end{bmatrix},
\nonumber
\end{equation}
so that $\tau_S$ and $\delta_S$ are the trace and determinant of
\begin{equation}
\tilde{M} = \begin{bmatrix} \tau_S & 1 \\ -\delta_S & 0 \end{bmatrix}.
\nonumber
\end{equation}
Thus there are five parameters ($\tau_L, \sigma_L, \delta_L, \tau_S, \delta_S \in \mathbb{R}$)
in addition to $\mu$.

Glendinning \cite{Gl17d} identified a Shilnikov homoclinic orbit
in a three-dimensional system of the form \eqref{eq:F2}.
This shows that the dynamical properties associated with Shilnikov homoclinic orbits, such as chaos,
can be generated in BEBs in three dimensions.

As an example, we use the parameter values
\begin{equation}
\begin{split}
\tau_L &= -0.5, \\
\sigma_L &= 4, \\
\delta_L &= 2,
\end{split}
\label{eq:param3d}
\end{equation}
so that the regular equilibrium is a saddle-focus (necessary for a Shilnikov homoclinic orbit).
We also fix $\delta_S = 1$ and consider various values of $\tau_S$.

Fig.~\ref{fig:ppBEB3d_framework} shows a phase portrait of \eqref{eq:normalForm} with $\tau_S = 0.275$ and $\mu = 1$
(with instead $\mu = -1$ the system does not appear to have a bounded attractor).
Here both $x^L$ and $x^S$ are admissible and unstable
(the BEB is of nonsmooth-fold type; in Theorem \ref{th:Feigin} we have $N_L = 1$, $N_S = 0$, and $c_1 < 0$).
Since the system is linear in $x_1 < 0$,
the stable and unstable manifolds of $x^L$ are linear as they emanate from $x^L$ (as shown in Fig.~\ref{fig:ppBEB3d_framework})
only becoming curved once they intersect $\Sigma$.
There is both an attracting limit cycle and an apparently chaotic attractor.
Thus two attractors are created in the BEB at $\mu = 0$.
The creation of multiple attractors in a BEB
(for both Filippov systems and continuous systems)
does not appear to have been reported previously.

\begin{figure}[t!]
\begin{center}
\setlength{\unitlength}{1cm}
\begin{picture}(8,5.3)
\put(0,0){\includegraphics[width=8cm]{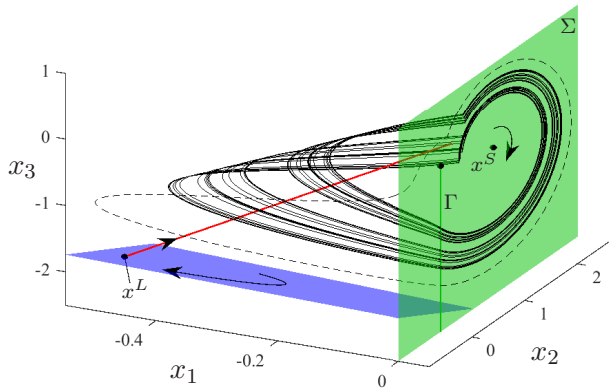}}
\put(2.15,.25){\small $x_1$}
\put(6.95,.5){\small $x_2$}
\put(0,2.97){\small $x_3$}
\put(1.51,1.32){\scriptsize $x^L$}
\put(6.15,3.01){\scriptsize $x^S$}
\put(7.34,4.84){\scriptsize $\Sigma$}
\put(5.79,2.5){\scriptsize $\Gamma$}
\end{picture}
\caption{
A phase portrait of the normal form \eqref{eq:normalForm} in three dimensions.
Specifically the parameter values are \eqref{eq:param3d}, $\tau_S = 0.275$, $\delta_S = 1$, and $\mu = 1$.
A stable limit cycle is shown with a dashed curve;
an apparently chaotic attractor is shown with a solid curve.
The regular equilibrium $x^L$ is a saddle-focus;
the pseudo-equilibrium $x^S$ is an unstable focus.
\label{fig:ppBEB3d_framework}
} 
\end{center}
\end{figure}

Next we construct a Poincar\'e map to better understand these attractors.
As discussed in \sect{properties}, sliding motion on $\Sigma$ turns into regular motion in $x_1 < 0$
along the line $x_1 = x_2 = 0$ with $x_3 < 0$.
We let $\Gamma$ denote this half-line
and $P$ denote the Poincar\'e map on $\Gamma$.
That is, for any $x_3 < 0$, we let $P(x_3)$ be such that
the forward orbit of $(0,0,x_3)$ next intersects $\Gamma$ at $\left( 0, 0, P(x_3) \right)$,
and let $P(x_3)$ be undefined if the orbit does not return to $\Gamma$.
The map $P$ captures all invariant sets of \eqref{eq:normalForm} that involve both sliding motion and regular motion.
The only other bounded invariant sets of \eqref{eq:normalForm} are the equilibria, $x^L$ and $x^S$,
because the sliding and regular dynamics are governed by linear systems.

Fig.~\ref{fig:slidingReturnMapBEB3d_framework} shows $P$
using the same parameter values as Fig.~\ref{fig:ppBEB3d_framework}.
The cobweb diagram indicates the apparently chaotic attractor.
The stable fixed point of $P$ (shaded blue)
corresponds to the attracting limit cycle.
The two unstable fixed points of $P$ (shaded red)
correspond to unstable limit cycles.
The map $P$ has oscillations accumulating at $x_3 \approx -2.21$
for which the forward orbit of $(0,0,x_3)$ converges to $x^L$,
for details see Glendinning \cite{Gl17d}.

\begin{figure}[h!]
\begin{center}
\setlength{\unitlength}{1cm}
\begin{picture}(8,6)
\put(0,0){\includegraphics[width=8cm]{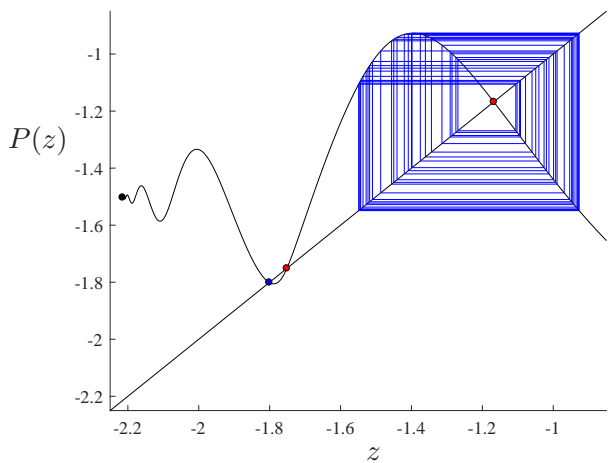}}
\put(4.76,0){\small $z$}
\put(0,4.15){\small $P(z)$}
\end{picture}
\caption{
The Poincar\'e map $P$ for the parameter values of Fig.~\ref{fig:ppBEB3d_framework}.
\label{fig:slidingReturnMapBEB3d_framework}
} 
\end{center}
\end{figure}

Fig.~\ref{fig:bifDiagBEB3d_framework} shows a bifurcation diagram
of \eqref{eq:normalForm} produced by varying the value of $\tau_S$.
More specifically, Fig.~\ref{fig:bifDiagBEB3d_framework} indicates
the long-term behaviour of $P$ with the initial value $x_3 = -1$.
By using this particular initial value, we observe bifurcations 
leading to the apparently chaotic attractor of Fig.~\ref{fig:ppBEB3d_framework}.
In particular we see period-doubling cascades and windows of periodicity
(typical for smooth non-invertible one-dimensional maps)
as well as adding-sliding bifurcations \cite{DiBu08} at $\tau_S \approx 0.142$ and $\tau_S \approx 0.163$
where the attractor includes the value $x_3 = 0$.

\begin{figure}[h!]
\begin{center}
\setlength{\unitlength}{1cm}
\begin{picture}(8,6)
\put(0,0){\includegraphics[width=8cm]{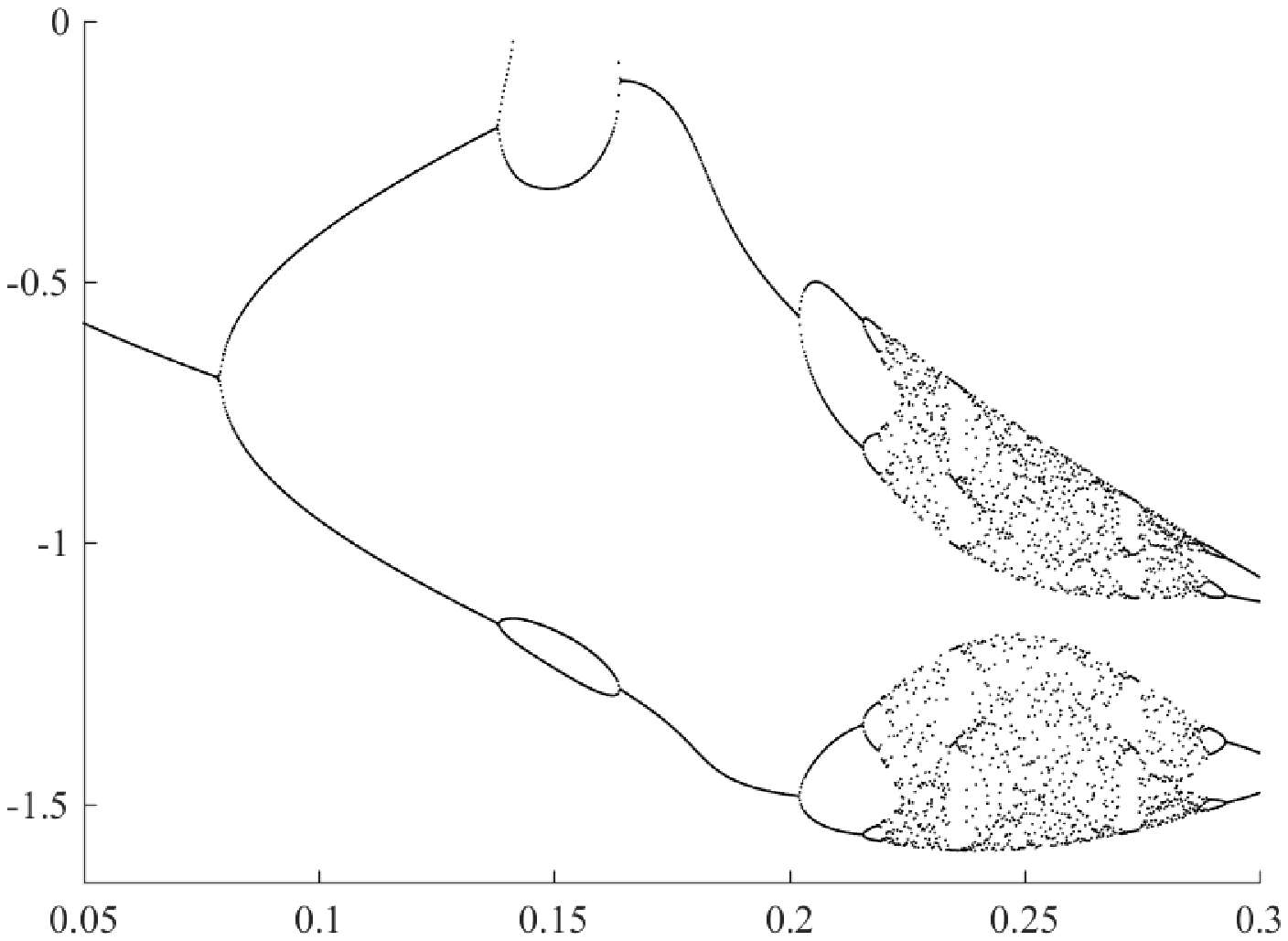}}
\put(4.14,0){\small $\tau_S$}
\put(0,3.43){\small $x_3$}
\end{picture}
\caption{
A bifurcation diagram of the normal form \eqref{eq:normalForm}
in three dimensions using the parameter values \eqref{eq:param3d}, $\delta_S = 1$, and $\mu = 1$.
The vertical axis represents the $x_3$-value on $\Gamma$ (where $x_1 = x_2 = 0$).
\label{fig:bifDiagBEB3d_framework}
} 
\end{center}
\end{figure}

Finally we consider the parameter values
\begin{equation}
\begin{split}
\tau_L &= -0.3, \\
\sigma_L &= 0.4, \\
\delta_L &= -0.1, \\
\tau_S &= -0.2, \\
\delta_S &= 1.
\end{split}
\label{eq:param3d2}
\end{equation}
For these values all eigenvalues of $C$ and $\tilde{M}$ have negative real part.
Thus, when admissible, the equilibria $x^L$ and $x^S$ are asymptotically stable.
Yet, as shown in Fig.~\ref{fig:ppBEB3d_dangerous_framework}, with $\mu = 0$ the origin is unstable.
This is because the system has an invariant cone involving a sector of $\Sigma$
over which orbits head outwards, and this dominates the inwards motion in $x_1 < 0$ to create a net outwards motion.
This counter-intuitive result is not possible in less than three dimensions
and is well-known for continuous systems \cite{CaFr06}.

\begin{figure}[h!]
\begin{center}
\setlength{\unitlength}{1cm}
\begin{picture}(8,6)
\put(0,0){\includegraphics[width=8cm]{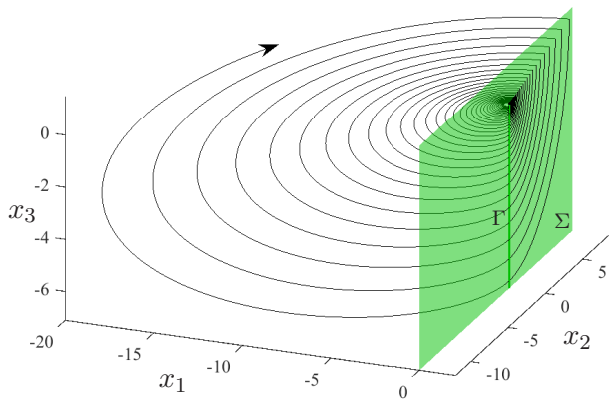}}
\put(2,.25){\small $x_1$}
\put(7.38,.82){\small $x_2$}
\put(0,2.5){\small $x_3$}
\put(7.26,2.36){\scriptsize $\Sigma$}
\put(6.43,2.4){\scriptsize $\Gamma$}
\end{picture}
\caption{
A phase portrait of \eqref{eq:normalForm} in three dimensions with
\eqref{eq:param3d2} and $\mu = 0$.
\label{fig:ppBEB3d_dangerous_framework}
} 
\end{center}
\end{figure}

\section{Discussion}
\label{sec:conc}

The $n$-dimensional normal form \eqref{eq:normalForm} has $2 n - 1$ parameters
(not counting the BEB parameter $\mu \in \mathbb{R}$, or $d_1 = \pm 1$).
The parameters are $a_1,\ldots,a_n \in \mathbb{R}$, which appear in the companion matrix $C$,
and $d_2,\ldots,d_n \in \mathbb{R}$, which appear in the vector $d$.
For a given BEB in a system of the form \eqref{eq:F2},
the values of these parameters can be determined by explicitly applying the coordinate transformation
of Theorem \ref{th:normalForm},
or, more simply, they can be determined from the eigenvalues of $A = D F^L(\b0;0)$ and $M = D F^S(\b0;0)$, as follows.
First set $d_1 = {\rm sgn}(c_1)$.
The eigenvalues of $A$ uniquely determine $a_1,\ldots,a_n$ because
these numbers are the coefficients of the characteristic polynomial of $A$
(we know that $A$ and $C$ are similar).
The eigenvalues of $M$ uniquely determine $d_2,\ldots,d_n$ in a similar fashion.
Specifically, $M$ has a zero eigenvalue,
and the remaining $n-1$ eigenvalues 
are those of $\tilde{M}$, whose characteristic polynomial
has coefficients $\frac{d_2}{d_1},\ldots,\frac{d_n}{d_1}$, see \eqref{eq:tildeM}.
Moreover, we conclude that the eigenvalues of $A$ and $M$
fully determine all structurally stable features of the dynamics local to the BEB,
assuming non-degeneracy conditions are satisfied.

The normal form is intended to provide a foundation by which BEBs can be analysed in systems of any number of dimensions.
Generic BEBs in two dimensions have been completely classified \cite{HoHo16}.
This paper reveals new complexities for BEBs in three dimensions, such as the creation of multiple attractors.
The results of \sect{3d} provide numerical evidence for the creation of a chaotic attractor in a BEB,
but the given example has no equilibrium on the other side of the BEB.
It remains to be determined if a stable equilibrium
can bifurcate to a chaotic attractor in a BEB of a Filippov system
(this has been found numerically in continuous systems \cite{Si16c}).

In \sect{3d} the three-dimensional dynamics was captured with a Poincar\'e map $P$.
For smooth three-dimensional systems of ODEs, Poincar\'e maps are two-dimensional.
Here $P$ is one-dimensional because orbits become constrained to
the codimension-one discontinuity surface $\Sigma$.
Yet, as in Fig.~\ref{fig:slidingReturnMapBEB3d_framework}, $P$ may be non-invertible.
This is because the contraction of orbits onto $\Sigma$ is a non-invertible process,
and permits the dynamics to be chaotic.
If we were to smooth the system by mollifying the switching condition,
the dynamics would be captured by a two-dimensional Poincar\'e map.
This map would be invertible but well-approximated by the one-dimensional non-invertible map $P$.
The approximation of three-dimensional dynamics with a one-dimensional map
occurs also for the dynamics associated with a Shilnikov homoclinic orbit \cite{GlSp84}
and an orbit homoclinic to a limit cycle \cite{GaSi72}, for example.

\end{document}